\documentclass[12pt,a4paper]{article}
\usepackage{a4wide,amsfonts,amsmath,latexsym,amssymb,euscript,eufrak,graphicx,units,mathrsfs}
\usepackage[utf8]{inputenc}
\usepackage{amsmath}
\usepackage{amsfonts}
\usepackage{amssymb}
\usepackage{amsthm}
\usepackage{floatrow}
\usepackage{blindtext}

\usepackage[usenames,dvipsnames]{color}

\numberwithin{equation}{section}

\newtheorem{theorem}{Theorem}[section]

\newtheorem{lemma}{Lemma}[section]

\newtheorem{definition}{Definition}[section]

\begin{document}

\renewcommand{\thefootnote}{\arabic{footnote}}

\begin{center}
{\Large \textbf{Least squares estimation for non-ergodic weighted fractional Ornstein-Uhlenbeck process of general parameters}} \\[0pt]
~\\[0pt]
Abdulaziz Alsenafi\footnote{ Department of Mathematics, Faculty of
Science, Kuwait University, Kuwait. E-mail:
 \texttt{abdulaziz.alsenafi@ku.edu.kw}\\ *Corresponding author}* \ Mishari
Al-Foraih\footnote{ Department of Mathematics, Faculty of Science,
Kuwait University, Kuwait. E-mail:
 \texttt{mishari.alforaih@ku.edu.kw}} \
 Khalifa Es-Sebaiy\footnote{ Department of Mathematics, Faculty of
Science, Kuwait University, Kuwait. E-mail:
 \texttt{khalifa.essebaiy@ku.edu.kw}}
\\[0pt]
~\\[0pt]
 Kuwait University \\[0pt]
~\\[0pt]
\end{center}

\begin{abstract}
Let $B^{a,b}:=\{B_t^{a,b},t\geq0\}$ be a weighted fractional
Brownian motion  of parameters  $a>-1$, $|b|<1$, $|b|<a+1$. We
consider a  least square-type method to estimate the drift parameter
$\theta>0$   of the weighted fractional Ornstein-Uhlenbeck process
$X:=\{X_t,t\geq0\}$ defined by $X_0=0; \ dX_t=\theta
X_tdt+dB_t^{a,b}$.
 In this work, we provide   least squares-type estimators for
$\theta$ based continuous-time and discrete-time observations of
$X$. The strong consistency and the asymptotic behavior in
distribution of the estimators  are studied for all $(a,b)$ such
that $a>-1$, $|b|<1$, $|b|<a+1$. Here we extend the results of
\cite{SYY,SYY2} (resp. \cite{CSC}), where the strong consistency and
the asymptotic distribution of the estimators are proved for
 $-\frac12<a<0$,   $-a<b<a+1$ (resp. $-1<a<0$, $-a<b<a+1$).  Simulations
are performed to illustrate the theoretical results.
\end{abstract}

{\noindent }\textbf{Key words}: Drift parameter estimation; Weighted
fractional Ornstein-Uhlenbeck process; Strong consistency;
Asymptotic distribution.\\

\noindent \textbf{2010 Mathematics Subject Classification}: 62F12;
60F05; 60G15; 60H05

\section{Introduction}
Parameter estimation for non-ergodic type diffusion processes has
been developed in several papers. For motivation and further
references, we refer the reader to Basawa and Scott \cite{BS}, Dietz
and Kutoyants \cite{DK}, Jacod \cite{jacod} and Shimizu
\cite{shimizu}.

Let $B^{a,b}:=\{B_t^{a,b},t\geq0\}$ be a weighted fractional
Brownian motion (wfBm) with parameters $(a,b)$ such that $a>-1$,
$|b|<1$ and  $|b|<a+1$, that is, $B^{a,b}$ is defined as a centered
Gaussian process starting from zero with covariance
\begin{eqnarray}R^{a,b}(t,s)=E\left(B_t^{a,b}B_s^{a,b}\right)=\int_0^{s\wedge
t} u^a\left[(t-u)^b+(s-u)^b\right]du,\quad s,t\geq0.\label{cov-B}
\end{eqnarray}
For $a = 0$, $-1 < b < 1$, the wfBm is a fractional Brownian motion
(fBm). The process $B^{a,b}$ was introduced by \cite{BGT} as an
extension
 of  fBm. Moreover, it shares several properties with
fBm, such as   self-similarity, path continuity, behavior of
increments, long-range dependence, non-semimartingale, and others.
But, unlike fBm, the wfBm  does not have stationary increments for
$a\neq0$. For more details about the subject, we refer the reader to
\cite{BGT}.

In this work we consider the non-ergodic Ornstein-Uhlenbeck process
$X:=\{X_t,t\geq0\}$ driven by a wfBm $B^{a,b}$, that is the unique
solution of the following linear stochastic differential equation
\begin{eqnarray}X_0=0; \qquad dX_t=\theta X_tdt+dB_t^{a,b},\label{wfBm}
\end{eqnarray}
where $\theta>0$ is an unknown parameter.

An example of interesting problem related to (\ref{wfBm}) is the
statistical estimation of  $\theta$ when one observes $X$. In recent
years, several researchers have been interested in studying
statistical estimation problems for Gaussian Ornstein-Uhlenbeck
processes. Let us mention some works in this direction in this case
of  Ornstein-Uhlenbeck process driven by a fractional Brownian
motion $B^{0,b}$, that is, the solution of (\ref{wfBm}), where
$a=0$. In the ergodic case corresponding to $\theta<0$, the
statistical estimation for the parameter  $\theta$
 has been studied by several papers, for
instance  \cite{HN,EEV,HNZ,EV,DEV} and the references therein.
Further, in the non-ergodic case corresponding to $\theta>0$, the
estimation of $\theta$ has been considered by using least squares
method, for example in \cite{EEO,BEO,EAA,EN} and the references
therein.

Here our aim is to estimate  the the drift parameter $\theta$ based
on continuous-time and discrete-time observations of $X$, by using
least squares-type estimators (LSEs) for $\theta$.\\
First we will consider the following LSE
 \begin{eqnarray}\label{estimator-cont} \widetilde{\theta}_t =\frac{X^2_t}{2\int_0^tX_s^2ds},\quad
 t\geq0,
\end{eqnarray}
as statistic to estimate $\theta$ based on the continuous-time
observations $\{X_s,\ s\in[0,t]\}$ of (\ref{wfBm}), as
$t\rightarrow\infty$. We will prove the strong consistency and the
asymptotic behavior in distribution of the estimator
$\widetilde{\theta}_t$   for all parameters $a>-1$, $|b|<1$ and
$|b|<a+1$. Our results extend those proved in \cite{SYY,SYY2}, where
$-\frac12<a<0$,   $-a<b<a+1$ only.

Further, from a practical point of view, in parametric inference, it
is more realistic and interesting to consider asymptotic estimation
for (\ref{wfBm}) based on discrete observations. So, we will assume
that the process $X$ given in (\ref{wfBm}) is observed equidistantly
in time with the step size $\Delta_n$: $t_i=i\Delta_{n},
i=0,\ldots,n$, and $T_n=n\Delta_{n}$ denotes the length of
the``observation window". Then we will consider the following
estimators
\begin{eqnarray}\label{expression thetahat}
\hat{\theta}_{n}=\displaystyle\frac{\displaystyle\sum_{i=1}^{n}X_{t_{i-1}}(X_{t_{i}}-X_{t_{i-1}})}{\Delta_n
\displaystyle\sum_{i=1}^{n}X_{t_{i-1}}^{2}},
\end{eqnarray}
and
\begin{eqnarray}\label{expression thetaCheck}
\check{\theta}_n=\displaystyle \frac{X_{T_{n}}^{2}}{2\Delta_n
\displaystyle\sum_{i=1}^{n}X_{t_{i-1}}^{2}}
\end{eqnarray}
as statistics to estimate $\theta$ based on the sampling data
$X_{t_i}, i=0,\ldots,n$, as $\Delta_n\longrightarrow0$ and
$n\longrightarrow\infty$. We will study the asymptotic behavior and
the rate consistency of the estimators $\hat{\theta}_{n}$ and
$\check{\theta}_n$ for all parameters $a>-1$, $|b|<1$ and $|b|<a+1$.
In this case, our results extend those proved in \cite{CSC}, where
$-1<a<0$, $-a<b<a+1$ only.

The rest of the paper is organized as follows. In Section 2, we
present auxiliary results that are used in the calculations of the
paper.  In Section 3, we prove the consistency and the asymptotic
distribution of the  estimator $\widetilde{\theta}_t$ given in
(\ref{estimator-cont}), based on the continuous-time observations of
$X$.  In Section 3, we  study the asymptotic behavior and the rate
consistency of the estimators $\hat{\theta}_{n}$ and
$\check{\theta}_n$ defined in (\ref{expression thetahat}) and
(\ref{expression thetaCheck}), respectively, based on the
discrete-time observations of $X$. Our theoretical study is
completed with simulations. We end the paper with a short review on
some results from \cite{EEO,EAA} needed for the proofs of our
results.

\section{Auxiliary Results}
 This section is devoted to prove some technical  ingredients, which will be needed
throughout this paper.\\ In the following lemma we provide a useful
decomposition of the covariance function $R^{a,b}(t,s)$ of
$B^{a,b}$.
\begin{lemma}\label{lemma-cov-R}Suppose that  $a>-1$,
$|b|<1$ and  $|b|<a+1$.   Then we can rewrite the covariance
$R^{a,b}(t,s)$ of  $B^{a,b}$, given in (\ref{decomp-main-R})  as
\begin{eqnarray}R^{a,b}(t,s)
&=&\beta\left(a+1,b+1\right)\left[t^{a+b+1}+s^{a+b+1}\right]-m(t,s),
\label{decomp-main-R}
\end{eqnarray}
where $\beta(c, d) = \int_0^1 x^{c-1}(1-x)^{d-1}$ denotes the usual
beta function, and the function $m(t,s)$ is defined by
\begin{eqnarray}m(t,s)&:=& \int_{s\wedge t}^{s\vee t} u^a(t\vee
s-u)^bdu. \label{function-m}
\end{eqnarray}
\end{lemma}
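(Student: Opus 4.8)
The plan is to evaluate the integral defining $R^{a,b}(t,s)$ directly and read off the stated decomposition. Since both sides of (\ref{decomp-main-R}) are symmetric in $(t,s)$ and $m(t,s)=m(s,t)$, it suffices to treat the case $s\le t$, so that $s\wedge t=s$ and $s\vee t=t$. Splitting the bracket in (\ref{cov-B}) gives
\[
R^{a,b}(t,s)=\int_0^s u^a(s-u)^b\,du+\int_0^s u^a(t-u)^b\,du,
\]
and I would handle the two integrals separately.

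For the first integral, the scaling substitution $u=sx$ yields $\int_0^s u^a(s-u)^b\,du=s^{a+b+1}\int_0^1 x^a(1-x)^b\,dx=\beta(a+1,b+1)\,s^{a+b+1}$, the integral converging because $a+1>0$ and $b+1>0$ (the latter since $|b|<1$). For the second integral I would write $\int_0^s=\int_0^t-\int_s^t$. The same scaling $u=tx$ gives $\int_0^t u^a(t-u)^b\,du=\beta(a+1,b+1)\,t^{a+b+1}$, while by the very definition (\ref{function-m}) of $m$ we have $\int_s^t u^a(t-u)^b\,du=\int_{s\wedge t}^{s\vee t}u^a(s\vee t-u)^b\,du=m(t,s)$. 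Hence $\int_0^s u^a(t-u)^b\,du=\beta(a+1,b+1)\,t^{a+b+1}-m(t,s)$. Adding the two contributions gives $R^{a,b}(t,s)=\beta(a+1,b+1)\left[t^{a+b+1}+s^{a+b+1}\right]-m(t,s)$, which is (\ref{decomp-main-R}); the case $s>t$ follows from the symmetry already noted.

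I do not expect a real obstacle here. The only points requiring a little care are the integrability of the beta-type integrands at the endpoints (which is exactly what the hypotheses $a>-1$ and $|b|<1$ guarantee, and for the boundary values $s=0$ or $t=0$ one also uses $a+b+1>0$, i.e. $|b|<a+1$), and the bookkeeping needed to recognize the leftover piece $\int_s^t u^a(t-u)^b\,du$ as precisely the function $m(t,s)$ in the statement.
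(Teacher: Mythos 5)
Your proposal is correct and follows essentially the same route as the paper: split the covariance integral, evaluate the complete integrals $\int_0^t u^a(t-u)^b\,du$ via the scaling substitution as $\beta(a+1,b+1)t^{a+b+1}$, and recognize the leftover piece $\int_{s\wedge t}^{s\vee t}u^a(t\vee s-u)^b\,du$ as $m(t,s)$. The only cosmetic difference is that you reduce to $s\le t$ by symmetry while the paper carries the $\vee/\wedge$ notation throughout; your explicit remarks on endpoint integrability are a welcome addition the paper leaves implicit.
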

\begin{proof}
We have for every $s,t\geq0$,
\begin{eqnarray}&&R^{a,b}(t,s)\\&=&E\left(B_t^{a,b}B_s^{a,b}\right)\nonumber
\\&=&\int_0^{s\wedge
t} u^a\left[(t-u)^b+(s-u)^b\right]du\nonumber
\\&=&\int_0^{s\wedge
t} u^a\left[(t\vee s-u)^b+(t\wedge s-u)^b\right]du\nonumber
\\&=&\int_0^{s\wedge
t} u^a(t\vee s-u)^bdu+\int_0^{s\wedge t}u^a(t\wedge
s-u)^bdu\nonumber
\\&=&\int_0^{s\vee
t} u^a(t\vee s-u)^bdu-\int_{s\wedge t}^{s\vee t} u^a(t\vee
s-u)^bdu+\int_0^{s\wedge t}u^a(t\wedge s-u)^bdu.\label{decomp1-R}
\end{eqnarray}
Further, making change of variables $x=u/t$, we have for every
$t\geq0$,
\begin{eqnarray} \int_0^{t} u^a (t-u)^b du&=&t^b\int_0^{t} u^a \left(1-\frac{u}{t}\right)^b du\nonumber
\\&=&t^{a+b+1}\int_0^{1} x^a \left(1-x\right)^b du\nonumber
\\&=&t^{a+b+1}\beta\left(a+1,b+1\right).\label{decomp2-R}
\end{eqnarray}\\
Therefore, combining (\ref{decomp1-R}) and  (\ref{decomp2-R}), we
deduce that
\begin{eqnarray}R^{a,b}(t,s)&=&\beta\left(a+1,b+1\right)\left[(t\vee s)^{a+b+1}+(t\wedge s)^{a+b+1}\right]-\int_{s\wedge t}^{s\vee
t} u^a(t\vee s-u)^bdu\nonumber\\
&=&\beta\left(a+1,b+1\right)\left[t^{a+b+1}+s^{a+b+1}\right]-\int_{s\wedge
t}^{s\vee t} u^a(t\vee s-u)^bdu, \label{decomp3-R}
\end{eqnarray}
which proves (\ref{decomp-main-R}).
\end{proof}

We will also need the following technical lemma.
\begin{lemma}We have as $t\longrightarrow\infty$,
\begin{eqnarray}I_t:=t^{-a}e^{-\theta t}\int_0^te^{\theta
s}m(t,s)ds\longrightarrow\frac{\Gamma(b+1)}{\theta^{b+2}},\label{cv1-m}
\end{eqnarray}
\begin{eqnarray}J_t:=t^{-a}e^{-2\theta t}\int_0^t\int_0^te^{\theta s}e^{\theta r}m(s,r)drds\longrightarrow\frac{\Gamma(b+1)}{\theta^{b+3}},\label{cv2-m}
\end{eqnarray}
 where $\Gamma(.)$ is the standard gamma function, whereas the function $m(t,s)$ is defined in (\ref{function-m}).
\end{lemma}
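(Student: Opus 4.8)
The plan is to strip the inner integral out of $m$ via Fubini's theorem, which turns both $I_t$ and $J_t$ into Laplace-type integrals, and then to isolate a single convergence statement that drives everything. \textbf{Step 1 (reduction of $I_t$).} On the range $\{0\le s\le t\}$ one has $m(t,s)=\int_s^t u^a(t-u)^b\,du$ by \eqref{function-m}, so interchanging the order of integration over the triangle $\{0\le s\le u\le t\}$ gives
\[
t^{-a}e^{-\theta t}\int_0^t e^{\theta s}m(t,s)\,ds=\frac{t^{-a}e^{-\theta t}}{\theta}\int_0^t u^a(t-u)^b\bigl(e^{\theta u}-1\bigr)\,du .
\]
The piece coming from the $-1$ equals $\theta^{-1}\beta(a+1,b+1)\,t^{b+1}e^{-\theta t}$ by \eqref{decomp2-R}, and tends to $0$. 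For the piece coming from $e^{\theta u}$, the substitution $v=t-u$ yields
\[
\frac{t^{-a}e^{-\theta t}}{\theta}\int_0^t u^a(t-u)^be^{\theta u}\,du=\frac1\theta\int_0^t\Bigl(1-\tfrac vt\Bigr)^{a}v^be^{-\theta v}\,dv .
\]

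\textbf{Step 2 (the key limit).} I will show that for $a>-1$, $b>-1$, $\theta>0$,
\[
\Phi(t):=\int_0^t\Bigl(1-\tfrac vt\Bigr)^{a}v^be^{-\theta v}\,dv\ \xrightarrow[t\to\infty]{}\ \int_0^\infty v^be^{-\theta v}\,dv=\frac{\Gamma(b+1)}{\theta^{b+1}} .
\]
Split the integral at $v=t/2$. On $[0,t/2]$ the factor $(1-v/t)^{a}$ is bounded (by $1$ if $a\ge0$, by $2^{-a}$ if $a<0$) and converges pointwise to $1$, so dominated convergence applies with dominating function $2^{|a|}v^be^{-\theta v}\in L^1(0,\infty)$ (integrable at $0$ since $b>-1$). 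On $[t/2,t]$ one bounds $v^be^{-\theta v}\le C_b\,t^{|b|}e^{-\theta t/2}$ and $\int_{t/2}^t(1-v/t)^{a}\,dv=t\int_{1/2}^1(1-w)^{a}\,dw<\infty$ since $a>-1$, so this piece is $O\bigl(t^{|b|+1}e^{-\theta t/2}\bigr)\to0$. Combined with Step 1 this gives \eqref{cv1-m}: $I_t\to\Gamma(b+1)/\theta^{b+2}$.

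\textbf{Step 3 ($J_t$).} Using $m(s,r)=m(r,s)$ and Fubini on $\{0\le r\le u\le s\}$,
\[
\int_0^s e^{\theta r}m(s,r)\,dr=\frac1\theta\int_0^s u^a(s-u)^b\bigl(e^{\theta u}-1\bigr)\,du=\frac1\theta\bigl[e^{\theta s}g(s)-\beta(a+1,b+1)\,s^{a+b+1}\bigr],
\]
where $g(s):=\int_0^s(s-v)^{a}v^be^{-\theta v}\,dv=s^{a}\Phi(s)$. Hence, after multiplying by $2$ (symmetry) and by $t^{-a}e^{-2\theta t}$,
\[
J_t=\frac{2t^{-a}e^{-2\theta t}}{\theta}\int_0^t e^{2\theta s}g(s)\,ds-\frac{2\beta(a+1,b+1)\,t^{-a}e^{-2\theta t}}{\theta}\int_0^t e^{\theta s}s^{a+b+1}\,ds .
\]
By Step 2, $g(s)\sim\bigl(\Gamma(b+1)/\theta^{b+1}\bigr)s^{a}$ as $s\to\infty$. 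An elementary Laplace estimate — for positive $h$ with $h(s)\sim cs^{\alpha}$ one has $\int_0^t e^{\lambda s}h(s)\,ds\sim\lambda^{-1}e^{\lambda t}h(t)$, proved by L'Hôpital — gives $\int_0^t e^{2\theta s}g(s)\,ds\sim(2\theta)^{-1}e^{2\theta t}g(t)$ and $\int_0^t e^{\theta s}s^{a+b+1}\,ds\sim\theta^{-1}e^{\theta t}t^{a+b+1}$. Substituting, the first term tends to $\Gamma(b+1)/\theta^{b+3}$ and the second is $O\bigl(t^{b+1}e^{-\theta t}\bigr)\to0$, which is \eqref{cv2-m}.

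The only genuinely delicate point is Step 2 in the ranges $-1<a<0$ (and symmetrically, at $v=0$, $-1<b<0$), where the integrand blows up at an endpoint — precisely the regime excluded in \cite{SYY,SYY2,CSC}. The dyadic split together with the crude exponential bound on $[t/2,t]$ is what makes the convergence hold uniformly in $t$; everything else is bookkeeping with Fubini and the one-line Laplace asymptotics.
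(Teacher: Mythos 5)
Your proof is correct and follows essentially the same route as the paper: Fubini to remove the inner integral of $m$, the substitution $v=t-u$ reducing $I_t$ to $\frac1\theta\int_0^t(1-v/t)^a v^b e^{-\theta v}\,dv$, and a L'H\^opital/Laplace argument reducing $J_t$ to $I_t$. The only real difference is that your Step 2 carefully justifies passing to the limit (dyadic split plus dominated convergence, needed because of the endpoint singularities when $a<0$ or $b<0$), a point the paper's proof asserts without argument, so your write-up is if anything slightly more complete.
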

\begin{proof}
We first prove (\ref{cv1-m}). We have,
\begin{eqnarray*}t^{-a}e^{-\theta t}\int_0^te^{\theta
s}m(t,s)ds&=&t^{-a}e^{-\theta t}\int_0^te^{\theta s}\int_{s}^{t}
u^a(t-u)^bduds\\
&=&t^{-a}e^{-\theta t}\int_0^tduu^a(t-u)^b\int_{0}^{u}dse^{\theta s}\\
&=&t^{-a}e^{-\theta t}\int_0^tduu^a(t-u)^b \frac{(e^{\theta
u}-1)}{\theta}
\\
&=&\frac{t^{-a}e^{-\theta t}}{\theta}\int_0^tu^a(t-u)^b  e^{\theta
u} du-\frac{t^{-a}e^{-\theta t}}{\theta}\int_0^tu^a(t-u)^b du.
\end{eqnarray*}
On the other hand, by the change of variables $x=t-u$, we get
\begin{eqnarray*} \frac{t^{-a}e^{-\theta t}}{\theta}\int_0^tu^a(t-u)^b
e^{\theta u} du &=&\frac{t^{-a}}{\theta}\int_0^t(t-x)^ax^b e^{-\theta x} dx\\
&=&\frac{1}{\theta}\int_0^t\left(1-\frac{x}{t}\right)^ax^b e^{-\theta x} dx\\
&\longrightarrow& \frac{1}{\theta}\int_0^{\infty}x^b e^{-\theta x}
dx=\frac{\Gamma(b+1)}{\theta^{b+2}}
\end{eqnarray*}as $t\longrightarrow\infty$. Moreover, by the change of variables
$x=u/t$,
\begin{eqnarray*} \frac{t^{-a}e^{-\theta t}}{\theta}\int_0^tu^a(t-u)^b
 du &=&\frac{e^{-\theta t}}{\theta}t^{b}\int_0^t(u/t)^a(1-\frac{u}{t})^b  dx\\
&=&\frac{e^{-\theta t}}{\theta}t^{b+1}\int_0^tx^a(1-x)^b  dx\\
&=&\frac{e^{-\theta t}}{\theta}t^{b+1}\beta(a+1,b+1)\\
&\longrightarrow& 0
\end{eqnarray*}as $t\longrightarrow\infty$. Thus the proof of the
convergence (\ref{cv1-m}) is done.\\
For (\ref{cv2-m}), using  L'H\^{o}pital's rule, we obtain
\begin{eqnarray*}\lim_{t\rightarrow\infty}t^{-a}e^{-2\theta t}\int_0^t\int_0^te^{\theta s}e^{\theta r}m(s,r)drds&=&
\lim_{t\rightarrow\infty}\frac{2\int_0^t\int_0^se^{\theta
s}e^{\theta r}m(s,r)drds}{t^{a}e^{2\theta
t}}\\
&=& \lim_{t\rightarrow\infty}\frac{2\int_0^t e^{\theta t}e^{\theta
r}m(t,r)dr }{t^{a}e^{2\theta t}\left(2\theta+\frac{a}{t}\right)}
\\
&=& \lim_{t\rightarrow\infty}\frac{2
}{\left(2\theta+\frac{a}{t}\right)}t^{-a}e^{-\theta
t}\int_0^te^{\theta r}m(t,r)dr\\&=&\frac{\Gamma(b+1)}{\theta^{b+3}},
\end{eqnarray*}
where  the latter equality comes from (\ref{cv1-m}). Therefore the
convergence (\ref{cv2-m}) is proved.
\end{proof}
\section{LSE  based on continuous-time
observation} In this section we will establish the consistency and
the asymptotic distribution of the least square-type estimator
$\widetilde{\theta}_t$ given in (\ref{estimator-cont}), based on the
continuous-time observation  $\{X_s,\ s\in[0,t]\}$ given by
(\ref{wfBm}), as $t\rightarrow\infty$.

Recall that if $X\sim \mathcal{N}(m_1,\sigma_1)$ and $Y\sim
\mathcal{N}(m_2,\sigma_2)$ are two independent random variables,
then $X/Y$ follows a   Cauchy-type distribution. For a motivation
and further references, we refer the reader to \cite{PTM}, as well
as \cite{marsaglia}. Notice also that if $N\sim\mathcal{N}(0,1)$ is
independent of $B^{a,b}$, then $N$ is independent of $Z_{\infty}$,
since $Z_{\infty}:=\int_0^\infty e^{-\theta s}B^{a,b}_sds$ is a
functional of $B^{a,b}$.

\begin{theorem}\label{asym-wfOU-cont-obse}
Assume that  $a>-1$, $|b|<1$, $|b|<a+1$, and let
$\widetilde{\theta_t}$ be the estimator given in
(\ref{estimator-cont}). Then, as $t\longrightarrow\infty$,
\begin{eqnarray*}\widetilde{\theta}_t\longrightarrow \theta \mbox{ almost surely}.
 \end{eqnarray*}
Moreover,  as $t\rightarrow \infty$,
\begin{eqnarray*}t^{-a/2}e^{\theta t}\left(\widetilde{\theta}_t-\theta\right)\overset{\texttt{law}}{\longrightarrow}
\frac{2\sigma_{B^{a,b}}}{\sqrt{E\left(Z_\infty^2\right)}}
\mathcal{C}(1),
\end{eqnarray*}where $\sigma_{B^{a,b}}=\frac{\Gamma(b+1)}{\theta^{b+1}}$, $Z_{\infty}:=\int_0^\infty
e^{-\theta s}B^{a,b}_sds$, whereas  $\mathcal{C}(1)$ is the standard
Cauchy distribution  with the probability density function
$\frac{1}{\pi (1+x^2)};\  x\in \mathbb{R}$.
\end{theorem}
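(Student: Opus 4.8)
The plan is to build everything on the explicit solution $X_t=e^{\theta t}\int_0^t e^{-\theta s}\,dB^{a,b}_s$ of \eqref{wfBm} and on the normalised process $Y_t:=e^{-\theta t}X_t=\int_0^t e^{-\theta s}\,dB^{a,b}_s$. Since $B^{a,b}$ grows at most polynomially, $e^{-\theta t}B^{a,b}_t\to0$ a.s., and hence $Y_t\to Y_\infty:=\int_0^\infty e^{-\theta s}\,dB^{a,b}_s=\theta Z_\infty$ almost surely and in $L^2$ (the last identity by integration by parts). Moreover $Y_\infty$ is a non-degenerate centred Gaussian, so $Y_\infty\neq0$ a.s. Strong consistency then follows immediately: from $\widetilde{\theta}_t=e^{2\theta t}Y_t^2\big/\big(2\int_0^t e^{2\theta s}Y_s^2\,ds\big)$ and L'H\^opital's rule on the denominator one gets $2\int_0^t e^{2\theta s}Y_s^2\,ds\sim\theta^{-1}e^{2\theta t}Y_\infty^2$ a.s., whence $\widetilde{\theta}_t\to\theta$ a.s.

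For the limiting law I would write
\[
\widetilde{\theta}_t-\theta=\frac{X_t^2-2\theta\int_0^t X_s^2\,ds}{2\int_0^t X_s^2\,ds}
\]
and expand $X_t^2=e^{2\theta t}Y_t^2$ into Wiener chaos (equivalently, apply the change-of-variable formula for the divergence integral, as recalled in the appendix following \cite{EEO,EAA}), which yields
\[
X_t^2-2\theta\int_0^t X_s^2\,ds=2\int_0^t X_s\,\delta B^{a,b}_s+\psi(t),\qquad \psi(t):=E(X_t^2)-2\theta\int_0^t E(X_s^2)\,ds,
\]
with $\delta$ the Skorokhod integral and $\psi$ the deterministic trace term. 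Since $2\int_0^t X_s^2\,ds\sim\theta^{-1}e^{2\theta t}Y_\infty^2$, it remains to analyse $t^{-a/2}e^{-\theta t}$ times the numerator. A computation from \eqref{decomp-main-R} gives $\psi(t)=O(e^{\theta t}t^{\,b-1})$, so $t^{-a/2}e^{-\theta t}\psi(t)=O(t^{\,b-1-a/2})\to0$; this is exactly where the standing assumptions enter, as $|b|<1$ together with $|b|<a+1$ forces $b<1+\tfrac a2$.

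For the stochastic part I would isolate $Y_\infty$ by writing $X_s\mathbf 1_{[0,t]}(s)=Y_\infty e^{\theta s}\mathbf 1_{[0,t]}(s)+(Y_s-Y_\infty)e^{\theta s}\mathbf 1_{[0,t]}(s)$ and invoking the product rule for $\delta$:
\[
\int_0^t X_s\,\delta B^{a,b}_s=Y_\infty\int_0^t e^{\theta s}\,dB^{a,b}_s-\big\langle DY_\infty,\,e^{\theta\cdot}\mathbf 1_{[0,t]}\big\rangle_{\mathcal H}+\rho_t ,
\]
$D$ and $\mathcal H$ denoting the Malliavin derivative and the Hilbert space of $B^{a,b}$. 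The middle, deterministic term is again $O(e^{\theta t}t^{\,b-1})$, and the $L^2$-isometry for $\delta$ gives $E(\rho_t^2)=o(t^{a}e^{2\theta t})$, so both disappear after the normalisation. The leading term is governed by the Wiener integral $\int_0^t e^{\theta s}\,dB^{a,b}_s$, whose second moment — computed by integration by parts together with \eqref{decomp-main-R} — reduces to the quantities of \eqref{cv1-m}--\eqref{cv2-m}:
\[
t^{-a}e^{-2\theta t}\,E\!\left[\Big(\int_0^t e^{\theta s}\,dB^{a,b}_s\Big)^{\!2}\right]=2\theta I_t-\theta^2 J_t\ \longrightarrow\ \frac{\Gamma(b+1)}{\theta^{b+1}},
\]
so $t^{-a/2}e^{-\theta t}\int_0^t e^{\theta s}\,dB^{a,b}_s$ converges in law to a centred Gaussian of variance $\Gamma(b+1)/\theta^{b+1}$, i.e.\ to $\sigma_{B^{a,b}}N$ with $N\sim\mathcal N(0,1)$. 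Because this quantity and $Y_\infty$ are jointly Gaussian with covariance $t^{-a/2}e^{-\theta t}\big\langle e^{\theta\cdot}\mathbf 1_{[0,t]},\,e^{-\theta\cdot}\mathbf 1_{[0,\infty)}\big\rangle_{\mathcal H}=O(t^{\,b-1-a/2})\to0$, the limiting $N$ is independent of $Y_\infty$ (hence of $Z_\infty$), and this survives the joint passage to the limit with the a.s.-convergent denominator. Collecting the three contributions,
\[
t^{-a/2}e^{\theta t}\big(\widetilde{\theta}_t-\theta\big)\ \overset{\mathrm{law}}{\longrightarrow}\ \frac{2\theta\,\sigma_{B^{a,b}}N}{Y_\infty}=\frac{2\sigma_{B^{a,b}}N}{Z_\infty},
\]
and, $N$ being independent of $Z_\infty\sim\mathcal N(0,E(Z_\infty^2))$, the right-hand side equals in law $\frac{2\sigma_{B^{a,b}}}{\sqrt{E(Z_\infty^2)}}\,\mathcal C(1)$, the asserted limit.

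The main difficulty — and what makes the full parameter range harder than \cite{SYY,SYY2} — is twofold. First, the divergence calculus and the three remainder estimates (for $\psi(t)$, for the cross term, and for $\rho_t$) must be carried through uniformly over all $a>-1$, $|b|<1$, $|b|<a+1$, in particular in the rough regime $\tfrac{a+b+1}{2}<\tfrac12$, where no pathwise (Young) integral is available and one must rely on the Malliavin-calculus estimates recalled from \cite{EEO,EAA}. Second, identifying the limiting variance rests on the exact cancellation $2\theta I_t-\theta^2 J_t\to\Gamma(b+1)/\theta^{b+1}$ afforded by \eqref{cv1-m}--\eqref{cv2-m}, together with the observation that the parameter constraints are precisely what annihilate every lower-order term of type $(\text{polynomial})\cdot e^{\theta t}$ occurring in $\psi(t)$, in the cross term, and in the $Y_\infty$-covariance.
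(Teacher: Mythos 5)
Your skeleton is sound, and several of your key limits coincide with what the paper actually computes: the a.s.\ convergence $e^{-2\theta t}\int_0^t X_s^2\,ds\to \zeta_\infty^2/(2\theta)$ for consistency, the variance limit $2\theta I_t-\theta^2 J_t\to\Gamma(b+1)/\theta^{b+1}$ from (\ref{cv1-m})--(\ref{cv2-m}) (the paper also tracks the extra term $t^{-a}\Delta_{g_{B^{a,b}}}(t)$ produced by the decomposition of Lemma \ref{lemma-cov-R}, which you omit; it vanishes but is not identically zero), the exponent $b-1-\tfrac a2<0$ as the precise place where the hypotheses on $(a,b)$ enter, and the asymptotic decorrelation of the first-chaos term from $Z_\infty$ followed by the ratio-of-independent-Gaussians step. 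But your route is genuinely different from the paper's. The paper never touches Skorokhod integrals or Malliavin calculus: it verifies the covariance-level hypotheses $(\mathcal{H}1)$--$(\mathcal{H}4)$ (H\"older continuity via Kolmogorov, $E(B_t^{a,b})^2=2\beta(a+1,b+1)t^{a+b+1}$, the limiting variance, and $E\bigl(B_s^{a,b}\,t^{-a/2}e^{-\theta t}\int_0^t e^{\theta r}dB_r^{a,b}\bigr)\to0$) and then invokes the black-box Theorem \ref{asym-G-cont-obse} from \cite{EEO}; the only stochastic integrals that ever appear have smooth deterministic integrands, so they exist as Young integrals for any positive H\"older exponent. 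That is exactly what makes the full range $a>-1$, $|b|<1$, $|b|<a+1$ accessible.

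This difference is where your proposal has a genuine gap. The decomposition $X_t^2-2\theta\int_0^tX_s^2\,ds=2\int_0^tX_s\,\delta B^{a,b}_s+\psi(t)$, the commutator term $\langle DY_\infty,e^{\theta\cdot}\mathbf 1_{[0,t]}\rangle_{\mathcal H}$, and above all the bound $E(\rho_t^2)=o(t^{a}e^{2\theta t})$ presuppose a workable Malliavin calculus for $B^{a,b}$: an explicit Hilbert space $\mathcal H$, an It\^o--Skorokhod formula, and a second-moment formula for $\delta$ with controllable norms. None of this is available off the shelf for the wfBm over the whole parameter range (in particular when $a+b+1<1$ or $b<0$), and asserting the $\rho_t$ estimate ``by the $L^2$-isometry for $\delta$'' is the hard step, not a routine one; likewise $\psi(t)=O(e^{\theta t}t^{b-1})$ is asserted without computation. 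Your closing remark that in the rough regime one ``must rely on the Malliavin-calculus estimates recalled from \cite{EEO,EAA}'' inverts the actual situation: those references, and this paper's appendix, supply Young-integral and covariance-decomposition tools precisely in order to avoid Malliavin calculus. If you want to stay close to your outline while remaining within the paper's framework, replace the Skorokhod decomposition by the purely algebraic identity $X_t^2-2\theta\int_0^tX_s^2\,ds=\zeta_t^2+2\theta\int_0^te^{2\theta s}\bigl(\zeta_t^2-\zeta_s^2\bigr)ds$ and extract the leading Wiener integral $\zeta_\infty\int_0^te^{\theta r}dB^{a,b}_r$ by a deterministic Fubini argument; all remainders are then controlled by the same covariance computations you already wrote down.
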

\begin{proof} In order to prove this Theorem
\ref{asym-wfOU-cont-obse}, using Theorem \ref{asym-G-cont-obse}, it
suffices to check that the assumptions $(\mathcal{H}1)$,
$(\mathcal{H}2)$,  $(\mathcal{H}3)$, $(\mathcal{H}4)$ hold.\\
It follows from (\ref{decomp-main-R}) that for every $0<s\leq t$,
\begin{eqnarray*}E\left(B_t^{a,b}-B_s^{a,b}\right)^2&=&2\int_s^t
u^a(t-u)^b du\\&=&2(t-s)^{b+1}\int_0^1 (t(1-x)+sx)^a x^b dx,
\end{eqnarray*} where the latter inequality comes from the change of
variables $x=(t-u)/(t-s)$.\\
Hence, it is easy to see that there exists a constant $C_{a,b}$
depending only on $a,b$, such that
\begin{eqnarray*}E\left(B_t^{a,b}-B_s^{a,b}\right)^2\leq C_{a,b}
t^a(t-s)^{b+1}.
\end{eqnarray*}
Thus for all fixed $T$ there exists a constant $C_{a,b}(T)$
depending only on $a,b,T$ such that for every $0<s\leq t\leq T$
\begin{eqnarray*}E\left(B_t^{a,b}-B_s^{a,b}\right)^2\leq C_{a,b}(T)
|t-s|^{(a+b+1)\wedge(b+1)},
\end{eqnarray*} where we used the fact that $a+b+1>0$.\\
Therefore, using the fact that $B^{a,b}$ is Gaussian, and
Kolmogorov's continuity criterion, we deduce that $B^{a,b}$ has a
version with $((a+b+1)\wedge(b+1)-\varepsilon)$-H\"older continuous
paths for every $\varepsilon\in(0,(a+b+1)\wedge(b+1))$. Thus
$(\mathcal{H}1)$ holds for any $\delta$ in
$(0,(a+b+1)\wedge(b+1))$.\\ On the other hand, according to
(\ref{decomp-main-R}) we have for every $t\geq0$,
\[E\left(B_t^{a,b}\right)^2=2\beta(1+a,1+b)t^{a+b+1},\] which proves
that $(\mathcal{H}2)$ holds for $\gamma=(a+b+1)/2$.\\
 Now  it remains to check that the assumptions $(\mathcal{H}3)$ and
$(\mathcal{H}4)$ hold for $\nu=-a/2$ and
$\sigma_{B^{a,b}}=\frac{\Gamma(b+1)}{\theta^{b+1}}$.
 Let us first compute the
limiting variance of $t^{-a/2}e^{-\theta t}\int_0^te^{\theta
s}dB^{a,b}_s$ as $t\rightarrow\infty$. By (\ref{decomp-main-R}) we
obtain
\begin{eqnarray}&&E\left[\left(t^{-a/2}e^{-\theta t}\int_0^te^{\theta
s}dB^{a,b}_s\right)^2\right]= E\left[\left(t^{-a/2}e^{-\theta
t}\left(e^{\theta t}B^{a,b}_t-\theta\int_0^te^{\theta
s}B^{a,b}_sds\right)\right)^2\right]\nonumber\\&=&
t^{-a}\left(R^{a,b}(t,t)-2\theta e^{-\theta t}\int_0^te^{\theta s}
R^{a,b}(t,s)ds+\theta^2e^{-2\theta t}\int_0^t\int_0^te^{\theta
s}e^{\theta r}R^{a,b}(s,r)dsdr\right)\nonumber\\&=&
t^{-a}\Delta_{g_{B^{a,b}}}(t)+2\theta I_{t}- \theta^2
J_{t},\label{Delta+I+J fBm}
\end{eqnarray}where $I_{t},\ J_{t}$ and $\Delta_{g_{B^{a,b}}}(t)$
are defined in (\ref{cv1-m}), (\ref{cv2-m}) and Lemma \ref{lemma
key1 of applications}, respectively,
whereas $g_{B^{a,b}}(s,r)=\beta(a+1,b+1)\left(s^{a+b+1}+r^{a+b+1}\right)$.\\
On the other hand,  since $\frac{\partial g_{B^{a,b}}}{\partial
s}(s,0)=\beta(a+1,b+1)(a+b+1)s^{a+b}$, and $\frac{\partial^2
g_{B^{a,b}} }{\partial s\partial r}(s,r)=0$, it follows from
(\ref{key1})  that
\begin{eqnarray}
t^{-a}\Delta_{g_{B^{a,b}}}(t)&=&2\beta(a+1,b+1)(a+b+1)t^{-a}e^{-2\theta
t}\int_0^ts^{a+b}e^{\theta s}ds\nonumber\\
&\leq&2\beta(a+1,b+1)e^{-\theta t}t^{a+b+1}\nonumber
\\&&\longrightarrow0 \mbox{ as } t\rightarrow\infty.\label{Delta
wfBm}
\end{eqnarray}
Combining (\ref{Delta+I+J fBm}), (\ref{Delta wfBm}), (\ref{cv1-m})
and (\ref{cv2-m}), we get
\begin{eqnarray*}E\left[\left(t^{-a/2}e^{-\theta t}\int_0^te^{\theta
s}dB^{a,b}_s\right)^2\right]&\longrightarrow&\frac{\Gamma(b+1)}{\theta^{b+1}}\quad\mbox{
as $t\rightarrow\infty$},
\end{eqnarray*} which implies that $(\mathcal{H}3)$ holds.\\
Hence, to finish the proof it remains to check that $(\mathcal{H}4)$
holds, that is, for all fixed $s\geq0$
\begin{eqnarray*} \label{cv Bs int}\lim_{t\rightarrow\infty}E\left(B^{a,b}_st^{-a/2}e^{-\theta t}\int_0^te^{\theta
r}dB^{a,b}_r\right)=0.
\end{eqnarray*}
Let us consider $s<t$.   According to   (\ref{Integration by
parts}), we can write
\begin{eqnarray*} &&E\left(B^{a,b}_st^{-a/2}e^{-\theta t}\int_0^te^{\theta
r}dB^{a,b}_r\right) \\&=&t^{-a/2}\left(R^{a,b}(s,t)- \theta
e^{-\theta t}\int_0^te^{\theta r}R^{a,b}(s,r)dr\right)
\\&=&t^{-a/2}\left(R^{a,b}(s,t)- \theta e^{-\theta
t}\int_s^te^{\theta r}R^{a,b}(s,r)dr- \theta e^{-\theta
t}\int_0^se^{\theta r}R^{a,b}(s,r)dr\right)
\\&=&  t^{-a/2}\left(e^{-\theta (t-s)}R^{a,b}(s,s)+  e^{-\theta t}\int_s^te^{\theta r}
\frac{\partial R^{a,b}}{\partial r}(s,r)dr- \theta e^{-\theta
t}\int_0^se^{\theta r}R^{a,b}(s,r)dr\right).
\end{eqnarray*}
It is clear that $t^{-a/2}\left(e^{-\theta (t-s)}R^{a,b}(s,s)-
\theta e^{-\theta t}\int_0^se^{\theta
r}R^{a,b}(s,r)dr\right)\longrightarrow0$
 as $t\rightarrow\infty$. Let us now prove that  \[t^{-a/2}e^{-\theta t}\int_s^te^{\theta r}
\frac{\partial R^{a,b}}{\partial r}(s,r)dr\longrightarrow0\]
 as $t\rightarrow\infty$. Using (\ref{cov-B}) we have for $s<r$
 \[\frac{\partial R^{a,b}}{\partial r}(s,r)=b\int_0^{s} u^a (r-u)^{b-1}du\]
Applying L'H\^ospital's rule we obtain
\begin{eqnarray*}
\lim_{t\rightarrow\infty}t^{-a/2}e^{-\theta t}\int_s^te^{\theta r}
\frac{\partial R^{a,b}}{\partial r}(s,r)dr
&=&\lim_{t\rightarrow\infty}\frac{bt^{-a/2}}{\theta+\frac{a}{2t}}\int_0^{s}
u^a (t-u)^{b-1}du
\nonumber\\&=&\lim_{t\rightarrow\infty}\frac{bt^{b-1-\frac{a}{2}}}{\theta+\frac{a}{2t}}\int_0^{s}
u^a (1-u/t)^{b-1}du \nonumber\\&\longrightarrow&0 \mbox{ as }
t\rightarrow\infty,
\end{eqnarray*}
due to  $b-1-\frac{a}{2}<0$. In fact, if $-1<a<0$, we use $b<a+1$,
then $b<a+1<\frac{a}{2}+1$. Otherwise, if $a>0$, we use $b<1$, then
$b-1-\frac{a}{2}<b-1-<0$.
 Therefore the proof of Theorem
\ref{asym-wfOU-cont-obse} is complete.

\end{proof}

\section{LSEs  based on discrete-time observations}
In this section, our purpose is to study the asymptotic behavior and
the rate consistency of the estimators $\hat{\theta}_{n}$ and
$\check{\theta}_n$ based on the sampling data $X_{t_i},
i=0,\ldots,n$ of (\ref{wfBm}), where $t_i=i\Delta_{n},
i=0,\ldots,n$, and $T_n=n\Delta_{n}$ denotes the length of the
``observation window".

\begin{definition}
Let $\{Z_n\}$ be a sequence of random variables defined on a
probability space $(\Omega, \mathcal{F},P)$. We say $\{Z_n\}$ is
tight (or bounded in probability), if for every $\varepsilon>0$,
there exists $M_{\varepsilon} > 0$ such that, \[P\left(|Z_n| >
M_{\varepsilon}\right) < \varepsilon,\quad \mbox{for all } n.\]
\end{definition}

\subsection{The asymptotic behavior and the rate consistency of LSEs}
\begin{theorem}\label{hat-check-wfOU-asymp-thm}
 Assume that  $a>-1$, $|b|<1$, $|b|<a+1$. Let $\hat{\theta}_n$ and $\check{\theta}_n$
be the estimators given in (\ref{expression thetahat}) and
(\ref{expression thetaCheck}), respectively. Suppose that
$\Delta_n\rightarrow0$ and $n\Delta_n^{1+\alpha}\rightarrow\infty$
for some $\alpha>0$. Then, as $n\rightarrow\infty,$
\begin{eqnarray*}\label{cv-ps-theta-hat}\hat{\theta}_n\longrightarrow\theta,
\quad\check{\theta}_n\longrightarrow\theta\quad \mbox{almost
surely,}
\end{eqnarray*} and for any $q\geq 0,$
\begin{eqnarray*}
\Delta_n^qe^{\theta T_n}(\hat{\theta}_{n}-\theta)\mbox{  and }
\Delta_n^qe^{\theta T_n}(\check{\theta}_{n}-\theta)\mbox{ are
  not   tight}.
\end{eqnarray*}
In addition, if we assume that $n\Delta_n^{3}\rightarrow0$ as
 $n\rightarrow\infty$, the estimators $\hat{\theta}_{n}$ and
 $\check{\theta}_{n}$ are
$\sqrt{T_n}-consistent$ in  the sense that the sequences
\[\sqrt{T_n}(\hat{\theta}_{n}-\theta) \mbox{ and }
\sqrt{T_n}(\check{\theta}_{n}-\theta) \mbox{ are
     tight.}\]
\end{theorem}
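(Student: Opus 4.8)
The plan is to proceed exactly as for Theorem~\ref{asym-wfOU-cont-obse}: reduce the statement to a general result about the non-ergodic Ornstein--Uhlenbeck process driven by an arbitrary continuous Gaussian process (here $B^{a,b}$), of the type developed in \cite{EEO,EAA}, whose hypotheses are of the same nature as $(\mathcal{H}1)$--$(\mathcal{H}4)$ together with one control on the increments $E(B^{a,b}_t-B^{a,b}_s)^2$. All of these were already verified inside the proof of Theorem~\ref{asym-wfOU-cont-obse} --- the local H\"older regularity, the exact variance $E(B^{a,b}_t)^2=2\beta(1+a,1+b)t^{a+b+1}$, and the bound $E(B^{a,b}_t-B^{a,b}_s)^2\le C_{a,b}\,t^{a}(t-s)^{b+1}$ --- and crucially they hold on the whole range $a>-1$, $|b|<1$, $|b|<a+1$, which is where the restriction $-1<a<0$, $-a<b<a+1$ of \cite{CSC} gets removed. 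Throughout I keep the representation $X_t=e^{\theta t}Z_t$ with $Z_t=\int_0^t e^{-\theta s}dB^{a,b}_s=e^{-\theta t}B^{a,b}_t+\theta\int_0^t e^{-\theta s}B^{a,b}_s\,ds$, so that $e^{-\theta t}X_t\to\theta Z_\infty$ almost surely and in $L^2$, with $Z_\infty=\int_0^\infty e^{-\theta s}B^{a,b}_s\,ds$ a non-degenerate Gaussian, hence $\theta Z_\infty\neq0$ a.s.

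The backbone is the elementary identity $X_{T_n}^2=2\sum_{i=1}^n X_{t_{i-1}}(X_{t_i}-X_{t_{i-1}})+\sum_{i=1}^n (X_{t_i}-X_{t_{i-1}})^2$ (recall $X_0=0$), which yields
\[
\check\theta_n-\hat\theta_n=\frac{\sum_{i=1}^n (X_{t_i}-X_{t_{i-1}})^2}{2\Delta_n\sum_{i=1}^n X_{t_{i-1}}^2}\ \ge\ 0,
\]
together with the one-step expansion $X_{t_i}-X_{t_{i-1}}=\theta\Delta_n X_{t_{i-1}}+\theta\int_{t_{i-1}}^{t_i}(X_s-X_{t_{i-1}})ds+(B^{a,b}_{t_i}-B^{a,b}_{t_{i-1}})$. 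Writing $S_n=\sum_{i=1}^n X_{t_{i-1}}^2$, the first step is to prove $e^{-2\theta T_n}\Delta_n S_n\to \theta Z_\infty^2/2$ almost surely: one compares $\Delta_n S_n$ with $\int_0^{T_n}X_s^2\,ds$ --- whose rescaled version converges a.s.\ by the continuous theory --- the discretisation error being controlled through the modulus of continuity of $X$ and the increment bound for $B^{a,b}$; passing from the $L^2$ estimate to the almost sure statement is where $n\Delta_n^{1+\alpha}\to\infty$ enters, via a Borel--Cantelli argument (it forces $T_n=n\Delta_n\gtrsim n^{\alpha/(1+\alpha)}\to\infty$, so the rescaled error series is summable). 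Next, inserting the one-step expansion into the numerators of $\hat\theta_n-\theta$ and $\check\theta_n-\theta$, simplifying, and using that after division by $\Delta_n^2 S_n$ the ``mixed'' sums $\sum_i X_{t_{i-1}}(B^{a,b}_{t_i}-B^{a,b}_{t_{i-1}})$, $\sum_i X_{t_{i-1}}\int_{t_{i-1}}^{t_i}(B^{a,b}_s-B^{a,b}_{t_{i-1}})ds$ and $\sum_i (B^{a,b}_{t_i}-B^{a,b}_{t_{i-1}})^2$ all vanish (their $L^2$ norms grow only polynomially in $n$ and $\Delta_n^{-1}$, while $e^{2\theta T_n}$ grows faster than any power of $n$), I obtain both the almost sure consistency $\hat\theta_n\to\theta$, $\check\theta_n\to\theta$, and the sharper asymptotics
\[
\frac{\hat\theta_n-\theta}{\Delta_n}\ \overset{\mathbb{P}}{\longrightarrow}\ \frac{\theta^2}{2},\qquad
\frac{\check\theta_n-\theta}{\Delta_n}\ \overset{\mathbb{P}}{\longrightarrow}\ \theta^2,
\]
i.e.\ the error is, to leading order, the deterministic bias $\tfrac{\theta^2}{2}\Delta_n$ (resp.\ $\theta^2\Delta_n$), the genuinely random part being only of order $T_n^{-a/2}e^{-\theta T_n}$.

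Granting this, the non-tightness is immediate: for fixed $q\ge0$, $\Delta_n^{q}e^{\theta T_n}(\hat\theta_n-\theta)=\big(\Delta_n^{q+1}e^{\theta T_n}\big)\cdot\frac{\hat\theta_n-\theta}{\Delta_n}$, and $\Delta_n^{q+1}e^{\theta T_n}\to\infty$ because $T_n\gtrsim n^{\alpha/(1+\alpha)}$ makes $e^{\theta T_n}$ dominate the power $\Delta_n^{-(q+1)}\le\mathrm{const}\cdot n^{(q+1)/(1+\alpha)}$; since $\frac{\hat\theta_n-\theta}{\Delta_n}\to\theta^2/2\neq0$ in probability, the product diverges in probability, hence is not tight, and the same applies to $\check\theta_n$ (limit $\theta^2$). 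For the $\sqrt{T_n}$-consistency under the additional assumption $n\Delta_n^3\to0$, write $\sqrt{T_n}(\hat\theta_n-\theta)=\big(\sqrt{T_n}\,\Delta_n\big)\cdot\frac{\hat\theta_n-\theta}{\Delta_n}$ and note $\sqrt{T_n}\,\Delta_n=\sqrt{n\Delta_n^3}\to0$ while $\frac{\hat\theta_n-\theta}{\Delta_n}$ converges in probability; hence $\sqrt{T_n}(\hat\theta_n-\theta)\to0$ in probability, in particular tight, and likewise for $\check\theta_n$.

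I expect the main obstacle to be the bookkeeping for those ``mixed'' sums: one must show that each of $\sum_i X_{t_{i-1}}(B^{a,b}_{t_i}-B^{a,b}_{t_{i-1}})$, $\sum_i X_{t_{i-1}}\int_{t_{i-1}}^{t_i}(B^{a,b}_s-B^{a,b}_{t_{i-1}})ds$ and $\sum_i (B^{a,b}_{t_i}-B^{a,b}_{t_{i-1}})^2$, once divided by $\Delta_n^2 S_n$ (which is of the order $\Delta_n e^{2\theta T_n}$), is $o_{\mathbb P}(1)$ uniformly over the admissible coupling of $n$ and $\Delta_n$ --- and it is precisely here that one needs the sharp increment bound $E(B^{a,b}_t-B^{a,b}_s)^2\le C_{a,b}\,t^{a}(t-s)^{b+1}$, valid for all $a>-1$, $|b|<1$, $|b|<a+1$, so that the restrictions of \cite{CSC} are lifted. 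A secondary, but real, technical point is the upgrade from convergence in probability to almost sure convergence in the consistency statement, which relies on the polynomial lower bound $T_n\gtrsim n^{\alpha/(1+\alpha)}$ guaranteed by $n\Delta_n^{1+\alpha}\to\infty$ together with Borel--Cantelli.
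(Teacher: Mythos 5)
Your proposal is correct and, at the top level, follows the same strategy as the paper: everything is funneled through the general theory for Gaussian-driven non-ergodic Ornstein--Uhlenbeck processes of \cite{EEO,EAA}, and the only input specific to $B^{a,b}$ is a set of variance/increment estimates valid on the whole range $a>-1$, $|b|<1$, $|b|<a+1$. The difference lies in how much of that general theory you re-prove. The paper's proof is very short: it invokes Theorem \ref{hat-check-asymp-thm} (the \cite{EAA} extension), notes that $(\mathcal{H}1)$ and $(\mathcal{H}2)$ were already checked for Theorem \ref{asym-wfOU-cont-obse}, and only has to verify the one new hypothesis $(\mathcal{H}5)$, a bound of the form $E[(\zeta_{t_i}-\zeta_{t_{i-1}})^2]\le C(n\Delta_n)^{\mu}\Delta_n^{\rho}e^{-2\theta t_i}$; this it does via the covariance decomposition $R^{a,b}=g_{B^{a,b}}-m$ of Lemma \ref{lemma-cov-R} and the identity of Lemma \ref{lemma key2 of applications}, which reduce the computation to the elementary pointwise bound $|m(r,u)|\le \Delta_n^{a+b+1}/(b+1)$ for $-1<a\le0$ and $(n\Delta_n)^a\Delta_n^{b+1}/(b+1)$ for $a>0$. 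You instead unpack the black box: starting from the one-step expansion you derive the sharper intermediate statement $(\hat\theta_n-\theta)/\Delta_n\to\theta^2/2$ and $(\check\theta_n-\theta)/\Delta_n\to\theta^2$ in probability, from which both the non-tightness (multiply by $\Delta_n^{q+1}e^{\theta T_n}\to\infty$) and the $\sqrt{T_n}$-consistency (multiply by $\sqrt{n\Delta_n^3}\to0$) follow by pure scaling; your increment control $E(B^{a,b}_t-B^{a,b}_s)^2\le C_{a,b}\,t^a(t-s)^{b+1}$ is equivalent to $(\mathcal{H}5)$ via the decomposition $\zeta_{t_i}-\zeta_{t_{i-1}}=e^{-\theta t_i}(B^{a,b}_{t_i}-B^{a,b}_{t_{i-1}})+\theta\int_{t_{i-1}}^{t_i}e^{-\theta s}(B^{a,b}_s-B^{a,b}_{t_{i-1}})\,ds$. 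Your route costs the extra bookkeeping that you acknowledge but do not fully carry out, and buys the explicit leading-order deterministic bias; the paper's route is shorter but defers all of that to the cited reference. One small correction: the $L^2$ norms of the mixed sums such as $\sum_i X_{t_{i-1}}(B^{a,b}_{t_i}-B^{a,b}_{t_{i-1}})$ do not grow only polynomially in $n$ and $\Delta_n^{-1}$ --- they carry a factor of order $e^{\theta T_n}$ coming from $X_{t_{i-1}}$; what saves the argument is that the denominator $\Delta_n^2 S_n$ is of order $\Delta_n e^{2\theta T_n}$, so the ratio is $e^{-\theta T_n}$ times a polynomial factor and still vanishes, exactly as in your non-tightness step.
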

\begin{proof} In order to prove this Theorem
\ref{hat-check-wfOU-asymp-thm}, using Theorem
\ref{hat-check-asymp-thm}, it suffices to check that the assumptions
$(\mathcal{H}1)$, $(\mathcal{H}2)$,  $(\mathcal{H}5)$ hold.\\
 From the proof of Theorem
\ref{asym-wfOU-cont-obse}, the assumptions $(\mathcal{H}1)$,
$(\mathcal{H}2)$ hold. Now it remains to check that $(\mathcal{H}5)$
holds. In this case, the process $\zeta$ is defined as
\[\zeta_t:=\int_0^te^{-\theta s}dB^{a,b}_s,\qquad t\geq0,\]
whereas the  integral is interpreted in the Young sense (see
Appendix).\\
Using the formula (\ref{Integration by parts}) and (\ref{key2}), we
can write
\begin{eqnarray*}
E\left[\left(\zeta_{t_{i}}-\zeta_{t_{i-1}}\right)^2\right]&=&E\left[\left(\int_{t_{i-1}}^{t_{i}}e^{-\theta
s}dB^{a,b}_s\right)^2\right]\\
&=&E\left[\left(e^{-\theta t_{i}}B_{t_{i}}^{a,b}-e^{-\theta
t_{i-1}}B_{t_{i-1}}^{a,b}+\theta\int_{t_{i-1}}^{t_{i}}e^{-\theta
s}B^{a,b}_sds\right)^2\right]
\\&=&\lambda_{g_{B^{a,b}}}(t_{i},t_{i-1})-\lambda_{m}(t_{i},t_{i-1})
\\&=&\int_{t_{i-1}}^{t_{i}}\int_{t_{i-1}}^{t_{i}} e^{-\theta (r+u)}\frac{\partial^2 g_{B^{a,b}}}{\partial
r\partial u}(r,u)dr du-\lambda_{m}(t_{i},t_{i-1})
\\&=&-\lambda_{m}(t_{i},t_{i-1}),
\end{eqnarray*} where $\lambda_{.}(t_{i},t_{i-1})$ is defined in Lemma \ref{lemma key2 of applications},
$g_{B^{a,b}}(s,r)=\beta(a+1,b+1)\left(s^{a+b+1}+r^{a+b+1}\right)$
and $\frac{\partial^2 g_{B^{a,b}} }{\partial s\partial r}(s,r)=0$,
whereas the term $\lambda_{m}(t_{i},t_{i-1})$ is equal to
\begin{eqnarray*}\lambda_{m}(t_{i},t_{i-1})&=&-2m(t_{i},t_{i-1})e^{-2\theta (t_{i-1}+t_{i})}+2\theta e^{-\theta t_{i}}\int_{t_{i-1}}^{t_{i}}
m(r,{t_{i}})e^{-\theta r}dr \\&&-2\theta e^{-\theta
{t_{i-1}}}\int_{t_{i-1}}^{t_{i}} m(r,{t_{i-1}})e^{-\theta r}dr
+\theta^2 \int_{t_{i-1}}^{t_{i}}\int_{t_{i-1}}^{t_{i}}
m(r,u)e^{-\theta (r+u)}dr du .
\end{eqnarray*}
Combining this with the fact  for every $t_{i-1}\leq u\leq r\leq
t_{i}$, $i\geq2$,
\begin{eqnarray*}
|m(r,u)| &=&\left|\int_u^rx^a(r-x)^bdx\right|\\&\leq &     \left\{
\begin{array}{ll}
\left|r^a\int_u^r(r-x)^bdx\right| & \mbox{ if }-1<a <0 \\
\\
\left|u^a\int_u^r(r-x)^bdx\right| & \mbox{ if }a>0
\end{array}%
\right.
\\&\leq &     \left\{
\begin{array}{ll}
\frac{\Delta_n^{a+b+1}}{b+1} & \mbox{ if }-1<a \leq0 \\
\\
\frac{(n\Delta_n)^a\Delta_n^{b+1}}{b+1} & \mbox{ if }a>0
\end{array}%
\right.
\end{eqnarray*}
together with $\Delta_n\longrightarrow0$, we deduce that there is a
positive constant $C$ such that
\begin{eqnarray*}
E\left[\left(\zeta_{t_{i}}-\zeta_{t_{i-1}}\right)^2\right]&\leq & C
\left\{
\begin{array}{ll}
\frac{\Delta_n^{a+b+1}}{b+1} & \mbox{ if }-1<a \leq0 \\
\\
\frac{(n\Delta_n)^a\Delta_n^{b+1}}{b+1} & \mbox{ if }a>0,
\end{array}%
\right.
\end{eqnarray*}
which proves that the assumption $(\mathcal{H}5)$ holds. Therefore
the desired result is obtained.
\end{proof}

\subsection{Numerical Results}
 Here we simulate 100 sample paths of the process $X$, given by
(\ref{wfBm}), using a regular partition of $n=2000$ intervals. The
tables below report the means and standard deviations of the
proposed estimators   $\hat{\theta}_{n}$ and $\check{\theta}_n$
defined, respectively,  by (\ref{expression thetahat}) and
(\ref{expression thetaCheck})  of the true value of the
 parameter $\theta$. The tables   confirm that the estimators $\hat{\theta}_{n}$ and $\check{\theta}_n$ are strongly
consistent even for small values of $n$ and have small standard
deviations for different true values of $\theta$.\\

\begin{table}[H]
    \centering
    \caption{The means and standard deviations of the estimator $\hat{\theta}_n$ for $a=0.5$ and $b=0.9$.}
    \label{t-theta2}
    \begin{tabular}{|c|c|c|}
         \cline{1-3}
        &$\theta=0.7$&$\theta=0.9$ \\\cline{1-3}
        Mean&0.7223197&0.9075169 \\\cline{1-3}
        Median&0.7286532&0.9131851 \\\cline{1-3}
        Std. dev.&0.1066532&0.08772779 \\\cline{1-3}
    \end{tabular}
\end{table}
\begin{table}[H]
    \centering
    \caption{The means and standard deviations of the estimator $\hat{\theta}_n$ for $a=0.1$ and $b=0.4$.}
    \label{t-theta4}
    \begin{tabular}{|c|c|c|}
        \cline{1-3}
        &$\theta=0.7$&$\theta=0.9$ \\\cline{1-3}
        Mean &0.6764275 &0.8892152\\\cline{1-3}
        Median &0.7006374 &0.9028244\\\cline{1-3}
        Std. dev. &0.1260278 &0.08042517\\\cline{1-3}
    \end{tabular}
\end{table}

\begin{table}[H]
        \centering
        \caption{The means and standard deviations of the estimator $\check{\theta}_n$ for $a=0.5$ and $b=0.9$.}
        \label{t-theta}
        \begin{tabular}{|c|c|c|}
            \cline{1-3}
            &$\theta=0.7$&$\theta=0.9$\\\cline{1-3}
            Mean&0.7234658&0.9091066 \\\cline{1-3}
            Median&0.7294589&0.9144367 \\\cline{1-3}
            Std. dev.&0.1064266&0.0879878 \\\cline{1-3}
        \end{tabular}
    \end{table}

\begin{table}[H]
    \centering
    \caption{The means and standard deviations of the estimator $\check{\theta}_n$ for $a=0.1$ and $b=0.4$.}
    \label{t-theta3}
    \begin{tabular}{|c|c|c|}
        \cline{1-3}
        &$\theta=0.7$&$\theta=0.9$\\\cline{1-3}
        Mean&0.6802641 &0.8911407\\\cline{1-3}
        Median&0.7015955 &0.9041038\\\cline{1-3}
        Std. dev.&0.120526 &0.07755756\\\cline{1-3}
    \end{tabular}
\end{table}

\section{Conclusion}
To conclude, in this paper we provide least squares-type estimators
for the drift parameter $\theta$ of  the weighted fractional
Ornstein-Uhlenbeck process $X$, given by (\ref{wfBm}), based
continuous-time and discrete-time observations of  $X$. The novelty
of our approach is that it allows, comparing with the literature on
statistical inference for $X$ discussed in \cite{SYY,SYY2,CSC}, to
consider the general case $a>-1$, $|b|<1$ and $|b|<a+1$. More
precisely,
\begin{itemize}
\item We estimate the drift parameter $\theta$ of (\ref{wfBm}) based on the continuous-time observations
$\{X_s,\ s\in[0,t]\}$, as $t\rightarrow\infty$. We  prove the strong
consistency and the asymptotic behavior in distribution of the
estimator $\widetilde{\theta}_t$   for all parameters $a>-1$,
$|b|<1$ and $|b|<a+1$. Our results extend those proved in
\cite{SYY,SYY2}, where $-\frac12<a<0$,   $-a<b<a+1$ only.
\item Suppose that  the process $X$ given in (\ref{wfBm}) is observed equidistantly
in time with the step size $\Delta_n$: $t_i=i\Delta_{n},
i=0,\ldots,n$. We estimate the drift parameter $\theta$ of
(\ref{wfBm})  on the sampling data $X_{t_i}, i=0,\ldots,n$, as
$\Delta_n\longrightarrow0$ and $n\longrightarrow\infty$. We  study
the asymptotic behavior and the rate consistency of the estimators
$\hat{\theta}_{n}$ and $\check{\theta}_n$ for all parameters $a>-1$,
$|b|<1$ and $|b|<a+1$. In this case, our results extend those proved
in \cite{CSC}, where $-1<a<0$, $-a<b<a+1$ only.
\end{itemize}
The proofs of the asymptotic behavior of the estimators
 are based on a new decomposition of the covariance function  $R^{a,b}(t,s)$ of the wfBm $B^{a,b}$ (see Lemma \ref{lemma-cov-R}),
  and slight extensions of  results
\cite{EEO}  and \cite{EAA} (see Theorem \ref{asym-G-cont-obse} and Theorem \ref{hat-check-asymp-thm} in Appendix).\\

\noindent {\bf Acknowledgments}\\

\noindent The authors would like to thank the  two anonymous
referees for their careful reading of the manuscript and for their
valuable suggestions and remarks.

\section{Appendix}

Here we present some ingredients needed in the paper.

 Let $G = \left(G_t,t\geq0\right)$  be a continuous centered Gaussian process defined on some probability
 space $(\Omega, \mathcal{F}, P)$ (Here,
and throughout the text, we assume that $\mathcal{F}$ is the
sigma-field generated by $G$). In this section  we consider the
non-ergodic case of Gaussian Ornstein-Uhlenbeck processes
$X=\left\{X_t, t\geq0\right\}$ given by the following linear
stochastic differential equation
\begin{eqnarray}\label{GOU}X_0=0;\quad  dX_t=\theta X_tdt+dG_t,\quad t\geq0,
\end{eqnarray}where   $\theta>0$   is an unknown parameter.
It is clear that the linear equation (\ref{GOU}) has the following
explicit solution
\begin{eqnarray*}X_t=e^{\theta t}\zeta_t,\qquad t\geq0,
\end{eqnarray*}
where
\[\zeta_t:=\int_0^te^{-\theta s}dG_s,\qquad t\geq0,\]
whereas this latter  integral is interpreted in the Young sense.\\

Let us introduce the following required assumptions.
\begin{itemize}
\item[$(\mathcal{H}1)$] The process $G$   has H\"older continuous paths of some order $\delta\in(0,1]$.
\item[$(\mathcal{H}2)$] For every $t\geq0$, $E\left(G_t^2\right)\leq ct^{2\gamma}$ for some positive constants  $c$ and $ \gamma$.
\item[$(\mathcal{H}3)$] There is constant $\nu$ in $\mathbb{R}$ such that the limiting variance of $t^{\nu}e^{-\theta t}\int_0^te^{\theta s}dG_s$  exists as
$t\rightarrow\infty$, that is, there exists a constant $\sigma_G>0$
such that
\[\lim_{t\rightarrow\infty}E\left[\left(t^{\nu}e^{-\theta t}\int_0^te^{\theta s}dG_s\right)^2\right]=\sigma_G^2.\]
\item[$(\mathcal{H}4)$] For   $\nu$  given in $(\mathcal{H}3)$, we have all fixed $s\geq0$
\begin{eqnarray*} \lim_{t\rightarrow\infty}E\left(G_st^{\nu}e^{-\theta t}\int_0^te^{\theta
r}dG_r\right)=0.
\end{eqnarray*}
\item[$(\mathcal{H}5)$]There exist positive constants $\rho,C$ and a real constant $\mu$ such that
\[E\left[\left(\zeta_{t_{i}}-\zeta_{t_{i-1}}\right)^2\right]\leq
C (n\Delta_n)^{\mu}\Delta_n^{\rho}
 e^{-2\theta t_{i}}\ \mbox{ for every $i=1,\ldots, n$, $n\geq1$}.\]
\end{itemize}

The following theorem  is a slight extension of the main result in
\cite{EEO}, and it can be established following the same arguments
as in \cite{EEO}.

\begin{theorem}\label{asym-G-cont-obse}
Assume that $(\mathcal{H}1)$ and $(\mathcal{H}2)$  hold and let
$\widetilde{\theta_t}$ be the estimator of the form
(\ref{estimator-cont}). Then, as $t\longrightarrow\infty$,
\begin{eqnarray*}\widetilde{\theta}_t\longrightarrow \theta \mbox{ almost surely}.
 \end{eqnarray*}
Moreover, if $(\mathcal{H}1)$-$(\mathcal{H}4)$  hold, then, as
$t\rightarrow \infty$,
\begin{eqnarray*}t^{\nu}e^{\theta t}\left(\widetilde{\theta}_t-\theta\right)\overset{\texttt{law}}{\longrightarrow}\frac{2\sigma_G}{\sqrt{E\left(Z_\infty^2\right)}}
\mathcal{C}(1),
\end{eqnarray*}where $Z_{\infty}:=\int_0^\infty
e^{-\theta s}G_sds$, whereas  $\mathcal{C}(1)$ is the standard
Cauchy distribution  with the probability density function
$\frac{1}{\pi (1+x^2)};\  x\in \mathbb{R}$.
\end{theorem}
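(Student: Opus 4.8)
The plan is to follow the scheme of \cite{EEO} and to reduce everything to the asymptotics of the two Gaussian Wiener integrals $\zeta_t=\int_0^te^{-\theta s}\,dG_s$ and $\Sigma_t:=\int_0^te^{\theta s}\,dG_s$; both are well defined for an arbitrary continuous centered Gaussian $G$ via the integration-by-parts formulas $\zeta_t=e^{-\theta t}G_t+\theta\int_0^te^{-\theta s}G_s\,ds$ and $\Sigma_t=e^{\theta t}G_t-\theta\int_0^te^{\theta s}G_s\,ds$, together with $X_t=e^{\theta t}\zeta_t$. First I would observe that, under $(\mathcal H1)$ and $(\mathcal H2)$, Gaussianity together with $E(G_t^2)\le ct^{2\gamma}$ forces $e^{-\theta t}\sup_{0\le s\le t}|G_s|\to0$ almost surely (a Borell--TIS/Dudley supremum estimate along integer times, combined with the H\"older regularity from $(\mathcal H1)$ to interpolate), and hence $\zeta_t\to\zeta_\infty:=\theta Z_\infty$ a.s.\ and in $L^2$, where $Z_\infty=\int_0^\infty e^{-\theta s}G_s\,ds$; being a nondegenerate centered Gaussian, $\zeta_\infty\neq0$ a.s. The strong consistency is then immediate: $e^{-2\theta t}X_t^2=\zeta_t^2\to\zeta_\infty^2$ a.s., while L'H\^opital's rule gives $e^{-2\theta t}\int_0^tX_s^2\,ds\to\zeta_\infty^2/(2\theta)$ a.s., so dividing numerator and denominator of $\widetilde\theta_t=X_t^2/(2\int_0^tX_s^2\,ds)$ by $e^{2\theta t}$ yields $\widetilde\theta_t\to\theta$ a.s.

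For the limit distribution the crux is an exact algebraic identity: substituting $\zeta_s=\zeta_\infty-\varepsilon_s$ with $\varepsilon_s:=\int_s^\infty e^{-\theta r}\,dG_r$ and using Fubini for Wiener integrals (no pathwise stochastic calculus is needed), one gets
\[
X_t^2-2\theta\int_0^tX_s^2\,ds=2\zeta_\infty\Sigma_t+R_t,\qquad R_t:=-\zeta_\infty^2+\varepsilon_t^2e^{2\theta t}-2\theta\int_0^te^{2\theta s}\varepsilon_s^2\,ds.
\]
Using $(\mathcal H2)$ to bound $E[(\varepsilon_te^{\theta t})^2]$ by a power of $t$, one checks $t^{\nu}e^{-\theta t}R_t\to0$ in $L^1$, hence in probability. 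Dividing by $e^{2\theta t}$ therefore reduces the statement to
\[
t^{\nu}e^{\theta t}(\widetilde\theta_t-\theta)=\frac{2\zeta_\infty\bigl(t^{\nu}e^{-\theta t}\Sigma_t\bigr)+t^{\nu}e^{-\theta t}R_t}{2e^{-2\theta t}\int_0^tX_s^2\,ds},
\]
where the denominator converges a.s.\ to $\zeta_\infty^2/\theta$ and the second term of the numerator to $0$ in probability, so the whole matter comes down to the law of $t^{\nu}e^{-\theta t}\Sigma_t$.

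Now $t^{\nu}e^{-\theta t}\Sigma_t$ is centered Gaussian with variance $\to\sigma_G^2$ by $(\mathcal H3)$, and its covariance with each fixed $G_s$ tends to $0$ by $(\mathcal H4)$. Since $t^{\nu}e^{-\theta t}\Sigma_t$ and any finite vector $(G_{s_1},\dots,G_{s_k})$ are jointly Gaussian, these two facts force $\bigl(t^{\nu}e^{-\theta t}\Sigma_t,(G_s)_{s\ge0}\bigr)$ to converge in law to $(\sigma_G N)\otimes(G_s)_{s\ge0}$ with $N\sim\mathcal N(0,1)$ independent of $\mathcal F$; equivalently, $t^{\nu}e^{-\theta t}\Sigma_t$ converges $\mathcal F$-stably to $\sigma_G N$, and in particular $N$ is independent of $\zeta_\infty=\theta Z_\infty$. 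Combining this with the a.s.\ convergence of the denominator and the convergence in probability of the remainder, Slutsky's theorem gives
\[
t^{\nu}e^{\theta t}(\widetilde\theta_t-\theta)\;\overset{\texttt{law}}{\longrightarrow}\;\frac{2\zeta_\infty\sigma_G N}{\zeta_\infty^2/\theta}=\frac{2\sigma_G N}{Z_\infty},
\]
and since $N\sim\mathcal N(0,1)$ is independent of $Z_\infty\sim\mathcal N(0,E(Z_\infty^2))$, the ratio $N/Z_\infty$ equals $E(Z_\infty^2)^{-1/2}$ times a quotient of two independent standard normals, i.e.\ $E(Z_\infty^2)^{-1/2}\,\mathcal C(1)$ (cf.\ \cite{PTM,marsaglia}), which is precisely the announced limit $\frac{2\sigma_G}{\sqrt{E(Z_\infty^2)}}\,\mathcal C(1)$.

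I expect the main obstacle to be the stable-convergence step: one must promote the ``one-$s$-at-a-time'' decorrelation supplied by $(\mathcal H4)$ into genuine asymptotic independence of $t^{\nu}e^{-\theta t}\Sigma_t$ from the whole $\sigma$-field $\mathcal F=\sigma(G)$, which is exactly what makes $N$ independent of $Z_\infty\in\mathcal F$ and so legitimizes passing to the quotient. Gaussianity is the mechanism that makes this work --- for a jointly Gaussian family, vanishing covariances against a generating set together with a convergent variance is enough --- and a monotone-class (characteristic-function) argument upgrades the finite-dimensional statement to all of $\mathcal F$. The remaining ingredients (the almost-sure growth control of $G$, the L'H\^opital limits, and the $L^1$ estimate on $R_t$) are routine.
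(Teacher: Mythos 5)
Your proposal is correct and follows essentially the same route as the paper, which gives no proof of this theorem but explicitly defers to the argument of \cite{EEO}: almost-sure convergence of $\zeta_t=e^{-\theta t}X_t$ to $\theta Z_\infty$ under $(\mathcal{H}1)$--$(\mathcal{H}2)$ for consistency, the decomposition of $X_t^2-2\theta\int_0^tX_s^2\,ds$ into $2\zeta_\infty\Sigma_t$ plus a negligible remainder, and the joint-Gaussian/vanishing-covariance argument from $(\mathcal{H}3)$--$(\mathcal{H}4)$ that yields stable convergence of $t^{\nu}e^{-\theta t}\Sigma_t$ to $\sigma_G N$ with $N$ independent of $\sigma(G)$, hence the Cauchy limit. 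Your reconstruction, including the exact form of $R_t$ and the reduction of the ratio-of-normals to $E(Z_\infty^2)^{-1/2}\,\mathcal{C}(1)$, is a faithful account of that argument.
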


The following theorem  is also a slight extension of the main result
in \cite{EAA}, and it can be proved following line by line the
proofs given in \cite{EAA}.

\begin{theorem}\label{hat-check-asymp-thm}
 Assume that $(\mathcal{H}1)$, $(\mathcal{H}2)$
and $(\mathcal{H}5)$  hold. Let $\hat{\theta}_n$ and
$\check{\theta}_n$ be the estimators of the forms (\ref{expression
thetahat}) and (\ref{expression thetaCheck}), respectively. Suppose
that $\Delta_n\rightarrow0$ and
$n\Delta_n^{1+\alpha}\rightarrow\infty$ for some $\alpha>0$. Then,
as $n\rightarrow\infty,$
\begin{eqnarray*}\label{cv-ps-theta-hat}\hat{\theta}_n\longrightarrow\theta,
\quad\check{\theta}_n\longrightarrow\theta\quad \mbox{almost
surely,}
\end{eqnarray*} and for any $q\geq 0,$
\begin{eqnarray*}
\Delta_n^qe^{\theta T_n}(\hat{\theta}_{n}-\theta)\mbox{  and }
\Delta_n^qe^{\theta T_n}(\check{\theta}_{n}-\theta)\mbox{ are
  not   tight}.
\end{eqnarray*}
In addition, if we assume that $n\Delta_n^{3}\rightarrow0$ as
 $n\rightarrow\infty$, the estimators $\hat{\theta}_{n}$ and
 $\check{\theta}_{n}$ are
$\sqrt{T_n}-consistent$ in  the sense that the sequences
\[\sqrt{T_n}(\hat{\theta}_{n}-\theta) \mbox{ and }
\sqrt{T_n}(\check{\theta}_{n}-\theta) \mbox{ are
     tight.}\]
\end{theorem}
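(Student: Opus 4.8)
The plan is to carry out, in the abstract Gaussian setting (\ref{GOU}), the argument of \cite{EAA}, checking that the three places where concrete properties of fBm enter there are covered by $(\mathcal{H}1)$, $(\mathcal{H}2)$ and $(\mathcal{H}5)$. Everything rests on the factorisation $X_t=e^{\theta t}\zeta_t$ with $\zeta_t=\int_0^t e^{-\theta s}\,dG_s$, the Young integral being meaningful thanks to the H\"older regularity $(\mathcal{H}1)$. First I would record the elementary properties of $\zeta$: integration by parts gives $\zeta_t=e^{-\theta t}G_t+\theta\int_0^t e^{-\theta s}G_s\,ds$, and since $G$ is Gaussian, $(\mathcal{H}2)$ and a Borel--Cantelli estimate on $\sup_{s\le t}|G_s|$ yield $e^{-\theta t}G_t\to 0$ a.s.\ and $\int_0^\infty e^{-\theta s}|G_s|\,ds<\infty$ a.s., so that $\zeta_t\to\zeta_\infty:=\theta Z_\infty$ almost surely and in $L^2$, with $\sup_{t\ge 0}E[\zeta_t^2]<\infty$ and $\zeta_\infty\ne 0$ a.s.\ (a non-degenerate Gaussian variable). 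Substituting $X_{t_i}=e^{\theta t_i}\zeta_{t_i}$ into (\ref{expression thetahat})--(\ref{expression thetaCheck}) and writing $w_i=e^{2\theta t_{i-1}}$, $W_n=\sum_{i=1}^n w_i=(e^{2\theta T_n}-1)/(e^{2\theta\Delta_n}-1)$, $V_n=W_n^{-1}\sum_{i=1}^n w_i\zeta_{t_{i-1}}^2$ and $A_n=\frac{e^{2\theta\Delta_n}-1}{2\Delta_n}(1-e^{-2\theta T_n})^{-1}$, a short rearrangement gives
\[\hat\theta_n-\theta=\left(\frac{e^{\theta\Delta_n}-1}{\Delta_n}-\theta\right)+e^{\theta\Delta_n}R_n,\qquad R_n:=\frac{1}{\Delta_n}\cdot\frac{\sum_{i=1}^n w_i\,\zeta_{t_{i-1}}(\zeta_{t_i}-\zeta_{t_{i-1}})}{\sum_{i=1}^n w_i\,\zeta_{t_{i-1}}^2},\]
together with $\check\theta_n-\theta=(A_n-\theta)\,\zeta_{T_n}^2/V_n+\theta\,(\zeta_{T_n}^2-V_n)/V_n$.

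\emph{Strong consistency.} A Toeplitz-type lemma applies to the triangular array with weights $a_{n,i}=w_i/W_n$: one has $\sum_i a_{n,i}=1$, $\max_i a_{n,i}=w_n/W_n\to 0$ since $\Delta_n\to 0$, and the total weight of the indices with $t_{i-1}\le T$ is $O(Te^{2\theta T}e^{-2\theta T_n})\to 0$ for each fixed $T$; since $\zeta_t\to\zeta_\infty$, this yields $V_n\to\zeta_\infty^2$, $W_n^{-1}\sum_i w_i\zeta_{t_{i-1}}\zeta_{t_i}\to\zeta_\infty^2$ and $\zeta_{T_n}^2\to\zeta_\infty^2$ a.s. As $\frac{e^{\theta\Delta_n}-1}{\Delta_n}\to\theta$, $A_n\to\theta$ and $V_n\to\zeta_\infty^2>0$ a.s., it suffices to show $N_n:=(\Delta_nW_n)^{-1}\sum_i w_i\zeta_{t_{i-1}}(\zeta_{t_i}-\zeta_{t_{i-1}})$ (so that $R_n=N_n/V_n$) and $\zeta_{T_n}^2-V_n$ vanish a.s.; this is where $(\mathcal{H}5)$ enters. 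Cauchy--Schwarz, $\sup_t E[\zeta_t^2]<\infty$, $(\mathcal{H}5)$ and $W_n\asymp e^{2\theta T_n}/\Delta_n$ give $E|N_n|\le C\,T_n^{\mu/2}\Delta_n^{\rho/2-1}e^{-\theta T_n}$, and via the telescoping bound $\|\zeta_{T_n}-\zeta_{t_{i-1}}\|_2\le C(n\Delta_n)^{\mu/2}\Delta_n^{\rho/2-1}e^{-\theta t_i}$ one gets $E|\zeta_{T_n}^2-V_n|\le C\,T_n^{\mu/2}\Delta_n^{\rho/2-1}e^{-\theta T_n}$. Now $n\Delta_n^{1+\alpha}\to\infty$ forces $\Delta_n\ge n^{-1/(1+\alpha)}$ for all large $n$, hence $T_n\ge n^{\alpha/(1+\alpha)}$ and $\Delta_n^{-1}\le n^{1/(1+\alpha)}$, so $e^{-\theta T_n}$ decays faster than every power of $n$ while the prefactors are polynomial in $n$; the bounds are therefore summable in $n$, and Markov's inequality together with Borel--Cantelli give the required a.s.\ convergences. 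Consequently $\hat\theta_n\to\theta$ and $\check\theta_n\to\theta$ almost surely.

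\emph{Non-tightness and rate.} The decompositions isolate a deterministic bias of exact order $\Delta_n$: $\frac{e^{\theta\Delta_n}-1}{\Delta_n}-\theta=\frac{\theta^2}{2}\Delta_n+O(\Delta_n^2)>0$ and $A_n-\theta=\theta^2\Delta_n+O(\Delta_n^2)+O(e^{-2\theta T_n})>0$, the exponential term being $o(\Delta_n)$ since $T_n$ is at least a power of $n$. Thus $\hat\theta_n-\theta=\frac{\theta^2}{2}\Delta_n(1+o_P(1))+e^{\theta\Delta_n}R_n$, and similarly for $\check\theta_n$. Multiplying by $\Delta_n^q e^{\theta T_n}$ with $q\ge 0$: the bias part is deterministic and, by the lower bounds on $\Delta_n$ and $T_n$, of order $\Delta_n^{q+1}e^{\theta T_n}\ge c\,n^{-(q+1)/(1+\alpha)}\exp(c'n^{\alpha/(1+\alpha)})\to+\infty$, whereas the noise part has $L^1$-norm $\le C\,\Delta_n^q e^{\theta T_n}\,T_n^{\mu/2}\Delta_n^{\rho/2-1}e^{-\theta T_n}=C\,T_n^{\mu/2}\Delta_n^{q+\rho/2-1}$, only polynomial in $n$ and hence negligible beside the bias; therefore $\Delta_n^q e^{\theta T_n}(\hat\theta_n-\theta)\to+\infty$ in probability and is not tight, and likewise for $\check\theta_n$. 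If in addition $n\Delta_n^{3}\to 0$, then $\sqrt{T_n}\,\Delta_n=\sqrt{n\Delta_n^{3}}\to 0$ kills the bias while $E|\sqrt{T_n}\,R_n|\le C\,T_n^{(\mu+1)/2}\Delta_n^{\rho/2-1}e^{-\theta T_n}\to 0$ (again because $T_n$ is at least a power of $n$); hence $\sqrt{T_n}(\hat\theta_n-\theta)\to 0$ and $\sqrt{T_n}(\check\theta_n-\theta)\to 0$ in probability, so in particular both sequences are tight.

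\emph{Main obstacle.} The delicate step is the control of the martingale-like remainders $\sum_i w_i\zeta_{t_{i-1}}(\zeta_{t_i}-\zeta_{t_{i-1}})$ and $\zeta_{T_n}^2-V_n$: this is exactly what $(\mathcal{H}5)$ is designed for, and one must keep track simultaneously of the exponential damping $e^{-2\theta t_i}$, which cancels the growth of $w_i$ and ultimately produces the decisive factor $e^{-\theta T_n}$, and of the polynomial gain $\Delta_n^{\rho}$. Moreover, upgrading convergence in probability to almost sure convergence is not automatic: it relies squarely on the hypothesis $n\Delta_n^{1+\alpha}\to\infty$, which is precisely what guarantees that $T_n$ grows at least like a positive power of $n$ and hence makes the exponentially small $L^1$ bounds summable. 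A minor additional point is justifying the Toeplitz-type arguments for arrays whose summands depend on $n$ through $t_i=i\Delta_n$, which is handled by the threshold splitting indicated above.
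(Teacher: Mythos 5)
Your proposal is correct and follows essentially the same route as the paper, which gives no proof of Theorem \ref{hat-check-asymp-thm} beyond the remark that it ``can be proved following line by line the proofs given in \cite{EAA}'': your reconstruction (the factorisation $X_t=e^{\theta t}\zeta_t$, the bias/noise decompositions of $\hat\theta_n-\theta$ and $\check\theta_n-\theta$, the Toeplitz argument for $V_n\to\zeta_\infty^2$, and the use of $(\mathcal{H}5)$ plus $n\Delta_n^{1+\alpha}\to\infty$ to make the $L^1$ bounds summable) is exactly the argument of \cite{EAA} with the concrete fBm covariance estimates replaced by the abstract hypotheses, which is the intended extension.
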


\begin{lemma}[\cite{EEO}]\label{lemma key1 of applications}Let $g : [0,\infty)\times[0,\infty)\longrightarrow\mathbb{R}$
 be a symmetric function such that $\frac{\partial  g }{\partial s }(s,r)$
 and $\frac{\partial^2 g }{\partial s\partial r}(s,r)$ integrable on   $(0,\infty)\times[0,\infty)$. Then, for every $t\geq0$,
\begin{eqnarray}  \Delta_g(t)&:=&g(t,t)-2\theta e^{-\theta t}\int_0^t g(s,t)e^{\theta s}ds
+\theta^2 e^{-2\theta t}\int_0^t\int_0^t g(s,r)e^{\theta (s+r)}dr ds
\nonumber\\&=&2e^{-2\theta t}\int_0^te^{\theta s} \frac{\partial g
}{\partial s}(s,0)ds+2 e^{-2\theta t}\int_0^tdse^{\theta s}\int_0^s
dr\frac{\partial^2 g }{\partial s\partial r}(s,r)e^{\theta
r}.\label{key1}
\end{eqnarray}
\end{lemma}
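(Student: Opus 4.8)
The plan is to reduce (\ref{key1}) to an elementary calculus verification: I claim that $e^{2\theta t}\Delta_g(t)$ and $e^{2\theta t}$ times the right-hand side of (\ref{key1}) agree at $t=0$ and have the same derivative, hence coincide. Throughout I write $g_1,g_2$ for the partial derivatives of $g$ in its first and second argument and $g_{12}=\frac{\partial^2 g}{\partial s\partial r}$ for the mixed partial; the symmetry of $g$ gives the three relations $g_1(t,t)=g_2(t,t)$, $g_2(0,t)=g_1(t,0)$, and $g_{12}(s,t)=g_{12}(t,s)$, all of which I will invoke. Set $H(t):=e^{2\theta t}\Delta_g(t)=A(t)+B(t)+C(t)$ with $A(t)=e^{2\theta t}g(t,t)$, $B(t)=-2\theta e^{\theta t}\int_0^t g(s,t)e^{\theta s}\,ds$, $C(t)=\theta^2\int_0^t\int_0^t g(s,r)e^{\theta(s+r)}\,ds\,dr$, and let $R(t)$ denote $e^{2\theta t}$ times the right-hand side of (\ref{key1}).

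First I would differentiate each piece by the Leibniz rule, which is justified by the assumed integrability of $g_1$ and $g_{12}$. For $A$ the relation $g_1(t,t)=g_2(t,t)$ gives $\frac{d}{dt}g(t,t)=2g_1(t,t)$, so $A'(t)=2\theta e^{2\theta t}g(t,t)+2e^{2\theta t}g_1(t,t)$. Differentiating $B$ produces a boundary term $-2\theta e^{2\theta t}g(t,t)$ together with $-2\theta^2 e^{\theta t}\int_0^t g(s,t)e^{\theta s}\,ds$ and $-2\theta e^{\theta t}\int_0^t g_2(s,t)e^{\theta s}\,ds$. For the double integral $C$ both limits depend on $t$, but the two boundary slices coincide by symmetry of $g$, yielding $C'(t)=2\theta^2 e^{\theta t}\int_0^t g(s,t)e^{\theta s}\,ds$. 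I expect a large cancellation: the $e^{2\theta t}g(t,t)$ terms from $A'$ and $B'$ cancel, and the $\theta^2 e^{\theta t}\int_0^t g(s,t)e^{\theta s}\,ds$ terms from $B'$ and $C'$ cancel, leaving only $H'(t)=2e^{2\theta t}g_1(t,t)-2\theta e^{\theta t}\int_0^t g_2(s,t)e^{\theta s}\,ds$.

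The decisive step is an integration by parts in $s$ on the surviving integral. Using $\partial_s g_2(s,t)=g_{12}(s,t)$ I obtain $-2\theta e^{\theta t}\int_0^t g_2(s,t)e^{\theta s}\,ds=-2e^{2\theta t}g_2(t,t)+2e^{\theta t}g_2(0,t)+2e^{\theta t}\int_0^t g_{12}(s,t)e^{\theta s}\,ds$. Here the boundary term $-2e^{2\theta t}g_2(t,t)$ exactly cancels the $2e^{2\theta t}g_1(t,t)$ above (again via $g_1(t,t)=g_2(t,t)$), so that $H'(t)=2e^{\theta t}g_2(0,t)+2e^{\theta t}\int_0^t g_{12}(s,t)e^{\theta s}\,ds$. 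On the other side, $R'(t)=2e^{\theta t}g_1(t,0)+2e^{\theta t}\int_0^t e^{\theta r}g_{12}(t,r)\,dr$, and the identifications $g_2(0,t)=g_1(t,0)$ together with the symmetry $g_{12}(s,t)=g_{12}(t,s)$ match the two expressions term by term, giving $H'(t)=R'(t)$.

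Finally I would compare values at the origin: $R(0)=0$, while $H(0)=g(0,0)$, so the two primitives differ by the constant $g(0,0)$. Since in every application $g$ is the covariance of the Gaussian process $G$ with $G_0=0$ (for instance $g_{B^{a,b}}(0,0)=0$ because $a+b+1>0$), this constant vanishes, and dividing $H(t)=R(t)$ by $e^{2\theta t}$ yields (\ref{key1}). The main obstacle I anticipate is purely organizational: correctly applying the Leibniz rule to the double integral $C(t)$ with both variable limits, and keeping precise track of which symmetry relation triggers each cancellation; the evaluation at $t=0$ is the only place where the normalization $g(0,0)=0$ is genuinely needed.
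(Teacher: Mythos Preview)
The paper does not actually prove this lemma; it is quoted from \cite{EEO} in the Appendix without argument. So there is no ``paper's proof'' to compare against, and your direct verification via differentiation of $e^{2\theta t}\Delta_g(t)$ is a perfectly legitimate way to establish the identity.

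Your computation is correct. The Leibniz differentiation of $A,B,C$, the cancellations you list, and the integration by parts in $s$ all check out line by line, and the three symmetry relations $g_1(t,t)=g_2(t,t)$, $g_2(0,t)=g_1(t,0)$, $g_{12}(s,t)=g_{12}(t,s)$ are exactly the consequences of $g(s,r)=g(r,s)$ that are needed. Your remark at the end is not a mere technicality but a genuine correction to the lemma as stated: without the hypothesis $g(0,0)=0$ the identity is simply false (take $g\equiv 1$, for which the right-hand side vanishes while $\Delta_g(t)=e^{-2\theta t}$). The correct general statement is
\[
\Delta_g(t)=e^{-2\theta t}g(0,0)+2e^{-2\theta t}\int_0^te^{\theta s}\,\frac{\partial g}{\partial s}(s,0)\,ds+2e^{-2\theta t}\int_0^t e^{\theta s}\int_0^s e^{\theta r}\,\frac{\partial^2 g}{\partial s\,\partial r}(s,r)\,dr\,ds,
\]
and, as you note, the extra term vanishes in the paper's applications because $g_{B^{a,b}}(0,0)=0$. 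It would be cleaner to state this normalisation explicitly rather than invoke the application to justify the lemma.
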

\begin{lemma}[\cite{EAA}]\label{lemma key2 of applications}Let $g : [0,\infty)\times[0,\infty)\longrightarrow\mathbb{R}$
 be a symmetric function such that $\frac{\partial  g }{\partial s }(s,r)$
 and $\frac{\partial^2 g }{\partial s\partial r}(s,r)$ integrable on   $(0,\infty)\times[0,\infty)$.
 Then, for every  $t\geq s\geq0$,
\begin{eqnarray}  \lambda_g(t,s)&:=&g(t,t)e^{-2\theta t}+g(s,s)e^{-2\theta
s}-2g(s,t)e^{-2\theta (s+t)}+2\theta e^{-\theta t}\int_s^t
g(r,t)e^{-\theta r}dr \nonumber\\&&-2\theta e^{-\theta s}\int_s^t
g(r,s)e^{-\theta r}dr +\theta^2 \int_s^t\int_s^t g(r,u)e^{-\theta
(r+u)}dr du \nonumber\\&=&\int_s^t\int_s^t e^{-\theta
(r+u)}\frac{\partial^2 g }{\partial r\partial u}(r,u)dr
du.\label{key2}
\end{eqnarray}
\end{lemma}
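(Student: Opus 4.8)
The plan is to prove this as a purely analytic identity by applying integration by parts twice to the double integral on the right-hand side of (\ref{key2}) and then reorganizing the resulting boundary and integral terms into the expression defining $\lambda_g(t,s)$; the symmetry of $g$ is what lets the various pieces pair up. Conceptually, $\lambda_g(t,s)$ is exactly the second moment $E\left[\left(\int_s^t e^{-\theta u}dG_u\right)^2\right]$ obtained when $g$ plays the role of the covariance and the Young integral is rewritten via integration by parts, so the target identity is the deterministic skeleton of that computation.

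First I would write $\frac{\partial^2 g}{\partial r\partial u}=\frac{\partial}{\partial r}\left(\frac{\partial g}{\partial u}\right)$ and integrate by parts in the $r$-variable inside $\int_s^t\int_s^t e^{-\theta(r+u)}\frac{\partial^2 g}{\partial r\partial u}(r,u)\,dr\,du$. This produces the boundary contribution $\int_s^t e^{-\theta u}\left[e^{-\theta t}\frac{\partial g}{\partial u}(t,u)-e^{-\theta s}\frac{\partial g}{\partial u}(s,u)\right]du$ together with the remainder $\theta\int_s^t\int_s^t e^{-\theta(r+u)}\frac{\partial g}{\partial u}(r,u)\,dr\,du$. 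The hypotheses that $\frac{\partial g}{\partial s}$ and $\frac{\partial^2 g}{\partial s\partial r}$ are integrable, together with Fubini's theorem to swap the order of integration, are what justify these manipulations.

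Next I would integrate by parts once more, this time in the $u$-variable, in each of the three integrals just obtained. Each such step replaces $\frac{\partial g}{\partial u}$ by $g$ evaluated at the endpoints $u=t$ and $u=s$, producing the corner terms $g(t,t)e^{-2\theta t}$ and $g(s,s)e^{-2\theta s}$, the cross terms involving $g(t,s)$ and $g(s,t)$, single integrals of the forms $\int_s^t g(\cdot,t)e^{-\theta r}\,dr$ and $\int_s^t g(\cdot,s)e^{-\theta r}\,dr$, and the surviving double integral $\theta^2\int_s^t\int_s^t g(r,u)e^{-\theta(r+u)}\,dr\,du$.

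The decisive bookkeeping step is to collect these contributions using the symmetry $g(r,u)=g(u,r)$. The two cross terms $-e^{-\theta(s+t)}g(t,s)$ and $-e^{-\theta(s+t)}g(s,t)$ coincide and add to $-2g(s,t)e^{-\theta(s+t)}$; the integral $\theta e^{-\theta t}\int_s^t e^{-\theta u}g(t,u)\,du$ coming from the boundary term equals $\theta e^{-\theta t}\int_s^t e^{-\theta r}g(r,t)\,dr$ coming from the last double integral, so together they give $2\theta e^{-\theta t}\int_s^t g(r,t)e^{-\theta r}\,dr$, and likewise the two $s$-integrals combine into $-2\theta e^{-\theta s}\int_s^t g(r,s)e^{-\theta r}\,dr$. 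Assembling the corner terms, the cross term, the two single integrals and the surviving double integral reproduces $\lambda_g(t,s)$, finishing the proof. I expect the only genuine obstacle to be the careful sign and exponent accounting in this final collection step; in particular the cross term should carry $e^{-\theta(s+t)}$ (matching the second-moment interpretation), so I would double-check the exponent written in the statement against this computation rather than worry about any analytically deep difficulty.
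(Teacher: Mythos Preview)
Your approach is correct. The paper does not actually supply a proof of this lemma: it is quoted verbatim from \cite{EAA} and stated without argument in the Appendix, so there is no ``paper's own proof'' to compare against. The double integration-by-parts scheme you outline is the standard and natural way to establish such an identity, and it goes through exactly as you describe; the symmetry of $g$ is indeed what makes the two boundary single-integrals pair into the factors of $2$ and the two cross terms coalesce.

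Your closing remark about the exponent is also on target. Carrying out the two integrations by parts yields the cross term $-2g(s,t)e^{-\theta(s+t)}$, not $-2g(s,t)e^{-2\theta(s+t)}$ as printed in the statement; the second-moment interpretation
\[
\lambda_g(t,s)=E\!\left[\left(e^{-\theta t}G_t-e^{-\theta s}G_s+\theta\int_s^t e^{-\theta r}G_r\,dr\right)^{\!2}\right]
\]
confirms this, since the product of $e^{-\theta t}G_t$ and $e^{-\theta s}G_s$ contributes $e^{-\theta(s+t)}g(s,t)$. So the discrepancy you flag is a typographical error in the paper (and presumably in the source \cite{EAA}), not a defect in your argument. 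With that correction, your proof plan is complete and rigorous.
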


Let us now recall the Young integral introduced in \cite{Young}. For
any $\alpha\in (0,1]$, we denote by $\mathcal{H}^\alpha([0,T])$ the
set of $\alpha$-H\"older continuous functions, that is, the set of
functions $f:[0,T]\to\mathbb{R}$ such that
\[
|f|_\alpha := \sup_{0\leq s<t\leq
T}\frac{|f(t)-f(s)|}{(t-s)^{\alpha}}<\infty.
\]
We also set $|f|_\infty=\sup_{t\in[0,T]}|f(t)|$, and we equip
$\mathcal{H}^\alpha([0,T])$ with the norm $\|f\|_\alpha :=
|f|_\alpha + |f|_\infty.$\\
 Let $f\in\mathcal{H}^\alpha([0,T])$, and
consider the operator $T_f:\mathcal{C}^1([0,T])
\to\mathcal{C}^0([0,T])$ defined as
\[
T_f(g)(t)=\int_0^t f(u)g'(u)du, \quad t\in[0,T].
\]
It can be shown (see, e.g., \cite[Section 3.1]{nourdin}) that, for
any $\beta\in(1-\alpha,1)$, there exists a constant
$C_{\alpha,\beta,T}>0$ depending only on $\alpha$, $\beta$ and $T$
such that, for any $g\in\mathcal{H}^\beta([0,T])$,
\[
\left\|\int_0^\cdot f(u)g'(u)du\right\|_\beta \leq
C_{\alpha,\beta,T} \|f\|_\alpha \|g\|_\beta.
\]
We deduce that, for any $\alpha\in (0,1)$, any
$f\in\mathcal{H}^\alpha([0,T])$ and any $\beta\in(1-\alpha,1)$, the
linear operator
$T_f:\mathcal{C}^1([0,T])\subset\mathcal{H}^\beta([0,T])\to
\mathcal{H}^\beta([0,T])$, defined as $T_f(g)=\int_0^\cdot
f(u)g'(u)du$, is continuous with respect to the norm
$\|\cdot\|_\beta$. By density, it extends (in an unique way) to an
operator defined on $\mathcal{H}^\beta$. As consequence, if
$f\in\mathcal{H}^\alpha([0,T])$, if $g\in\mathcal{H}^\beta([0,T])$
and if $\alpha+\beta>1$, then the (so-called) Young integral
$\int_0^\cdot f(u)dg(u)$ is well-defined as being $T_f(g)$ (see
\cite{Young}).

The Young integral obeys the following formula. Let
$f\in\mathcal{H}^\alpha([0,T])$ with $\alpha\in(0,1)$ and
$g\in\mathcal{H}^\beta([0,T])$ with $\beta\in(0,1)$ such that
$\alpha+\beta>1$. Then $\int_0^. g_udf_u$ and $\int_0^. f_u dg_u$
are well-defined as the Young integrals. Moreover, for all
$t\in[0,T]$,
\begin{eqnarray}\label{Integration by parts}
f_tg_t=f_0g_0+\int_0^t g_udf_u+\int_0^t f_u dg_u.
\end{eqnarray}

\end{document}